\documentclass[11pt]{article}

\usepackage[T1]{fontenc}
\usepackage[utf8]{inputenc}
\usepackage[a4paper,margin=1in]{geometry}
\usepackage{amsmath}
\usepackage{amssymb}
\usepackage{amsthm}
\usepackage{mathrsfs}
\usepackage{natbib}
\usepackage{graphicx}
\usepackage{xcolor}
\usepackage{hyperref}
\hypersetup{
	colorlinks=true,
	linkcolor=blue,
	citecolor=blue,
	urlcolor=blue
}

\makeatletter
\def\namedlabel#1#2{\begingroup
#2%
\def\@currentlabel{#2}%
\phantomsection\label{#1}\endgroup}
\makeatother

\theoremstyle{plain}

\newtheorem{theorem}{Theorem}[section]

\newtheorem{proposition}[theorem]{Proposition}
\newtheorem{remark}[theorem]{Remark}
\newtheorem{corollary}[theorem]{Corollary}
\theoremstyle{definition}
\newtheorem{definition}[theorem]{Definition}

\newcommand{\R}{{\mathbb R}}
\newcommand{\N}{{\mathbb N}}
\newcommand{\pfc}{{\mathcal F_c}}
\newcommand{\pkc}{{\mathcal K_c}}
\newcommand{\eps}{{\varepsilon}}
\newcommand{\del}{{\delta}}

\DeclareMathOperator{\midd}{mid}
\DeclareMathOperator{\spr}{spr}

\newcommand{\keywords}[1]{\vspace{0.5em}\noindent\textbf{\textit{Keywords:}} #1}

\title{On the continuity of the Tukey depth function for fuzzy data}

\author{
Luis Gonz\'alez-de-la-Fuente\thanks{Departamento de Matem\'aticas, Estad\'istica y Computaci\'on, Universidad de Cantabria. Avda.\ Los Castros s/n, 39005 Santander, Spain. \texttt{gdelafuentel@unican.es}}
\and
Alicia Nieto-Reyes\thanks{Departamento de Matem\'aticas, Estad\'istica y Computaci\'on, Universidad de Cantabria. Avda.\ Los Castros s/n, 39005 Santander, Spain. \texttt{alicia.nieto@unican.es}}
\and
Pedro Ter\'an\thanks{Departamento de Estad\'istica e Investigaci\'on Operativa y Did\'actica de las Matem\'aticas, Universidad de Oviedo. C/ Leopoldo Calvo Sotelo n\textordmasculine\ 18, 33007 Oviedo, Spain. \texttt{teranpedro@uniovi.es}}
}

\date{}

\begin{document}

\maketitle

\begin{abstract}
The practical use of statistical depth for fuzzy data requires regularity, inferential stability, and computational feasibility. This paper studies these aspects for the first time for the fuzzy depth, in particular for the fuzzy Tukey depth. In particular, we investigate the continuity of this depth with respect to each of its arguments. As a function of the elements of the underlying space, we prove its upper semicontinuity  under the main metrics used in fuzzy spaces. As a function of the distribution with respect to which the depth is computed, we establish the almost sure uniform consistency of its empirical version for continuous fuzzy random variables. These results ensure closed depth regions and support the use of sample depth values to approximate their population counterparts. We also derive consistency of empirical deepest-point estimators and study a finite-grid approximation of the depth, supported by theoretical and simulation results.
\end{abstract}

\keywords{{\color{blue}Fuzzy data, Fuzzy random variable, Nonparametric statistics, Statistical depth, Fuzzy Tukey depth}}

\bigskip

\section{Introduction}

The extension of order-based statistical methods beyond the real line is the central motivation behind the theory of statistical depth \citep{Serflingcondiciones}. In spaces within the real line, the inherent natural order allows the definition of medians, quantiles, and ranks in a direct and geometrically transparent way. In multivariate, and non-Euclidean spaces, this total order is no longer available \citep{trianguloCabrera}. Statistical depth functions overcome this difficulty by assigning to each element of the underlying space a numerical value representing its degree of centrality with respect to a probability distribution on the space. A depth function therefore induces a centre-outward ordering of the space with respect to that distribution, providing a basis for robust and nonparametric statistical procedures  \citep{AgostinelliNietoFrancisciClustering, GnettnerKirchTest}.

The half-space, or Tukey, depth function was introduced by \citet{tukey} in the multivariate setting as a way of measuring  the centrality of a point within a cloud of observations through the use of halfspaces. Thus, the multivariate halfspace depth of $x\in\mathbb R^p$ with respect to a p-dimensional random vector $X$ associated with a probability space $(O,\sigma, P)$ is 
\begin{equation}\label{TukeyR}
	HD(x;X) := \inf\{
	P_X(H) : H\text{ is a closed halfspace with }x\in H\}.
\end{equation}
Despite Tukey depth remains as the most prominent example of depth in the literature, because of its good theoretical properties, a substantial number of multivariate depth functions have been proposed since then; these include  Oja depth  \citep{Oja}, simplicial depth \citep{Liu}, regression depth \citep{regressiondepth} and spatial depth \citep{spatialdepth}.  Additionally, a systematic axiomatic treatment for the notion of multivariate  statistical depth was later provided in \citet{ZuoSerfling}, placing depth functions within a general framework for multivariate ordering. This resulted in that not all the existing examples of depth satisfy the notion.

The development of statistical depth has not been restricted to multivariate spaces. Modern statistical problems frequently involve observations that are not vectors in \(\mathbb{R}^p\), but functions, sets, shapes, distributions or other complex objects. This has motivated the construction of depth notions in functional and more general data spaces. In the functional setting, the literature contains several examples of depth, such as the \(h\)-depth \citep{cuevashdepth}, the band and modified band depth  \citep{lopezpintado} and metric based depths \citep{HeatherJMVA, math, depthmetric}.  
Some of these examples were introduced before the formulation of general axiomatic framework; \citet{NietoBattey} proposed an axiomatic definition of statistical depth for functional metric spaces, adapting the multivariate principles  and adding others to suit the particular structure of functional spaces.

Fuzzy data constitute a particularly relevant class of nonstandard data. They arise when observations are affected not only by randomness but also by imprecision, vagueness or partial membership. Since the seminal work of \citet{zadehfuzzysets}, fuzzy sets have provided a flexible mathematical framework for representing objects whose boundaries are not sharply defined. In statistical applications, fuzzy data appear naturally in contexts involving expert assessments \citep{coppigilkiers}, linguistic variables \citep{zadehextension}, imprecise measurements \citep{krusemeyer}, and interval-like information \citep{mirandacousogil}; and common statistical methods are defined specifically for the fuzzy framework, such as hypothesis testing  \citep{testfuzzy,lubianotest,montenegrotest}.
The first general notions of statistical depth for fuzzy data were proposed by \citet{primerarticulo}, who introduced two axiomatic approaches depending on whether the fuzzy space is interpreted through its algebraic structure or through its metric structure. Within this work, the Tukey depth for fuzzy sets appears as a natural extension of the classical halfspace depth and is proved to satisfy the two axiomatic notions. 
Other generalizations of depth functions to the fuzzy framework have followed, such as generalizations of the simplicial depth \citep{tercerarticulo} and  of the projection depth and the $L^r$-type depths  \citep{sinovadepth, cuartoarticulo}.

{\bf Motivation:} Once a depth function has been defined, its usefulness depends not only on its 
satisfaction of the properties constituting the notion of depth on the corresponding space, but also on its regularity, inferential stability and computability. In this respect, as depth functions are evaluated on an element of the underlying space, say $x\in\mathcal{F},$ with respect to a distribution on a family of distributions, say $\mathcal{P}\in\mathscr{P},$ defined over $\mathcal{F},$ i.e.
$$
\begin{array}{lcll}
D(\cdot;\cdot):& \mathcal{F} \times \mathscr{P}&\longrightarrow &\mathbb{R}\\
&(x;\mathcal{P})&\longmapsto &D(x;\mathcal{P}),
\end{array}
$$
\citet{Serflingcondiciones} proposed to study the continuity of depth functions with respect to each of its arguments,
\begin{itemize}
\item continuity of $D(x;\mathcal{P})$ as a function of $x$, or merely upper semi-continuity
\item continuity of $D(x;\mathcal{P})$ as a function of $\mathcal{P}$, or merely uniform consistency,
\end{itemize}
as desirable properties that a depth should fulfill in order to behave appropriately.

Upper-semicontinuity is a natural regularity requirement for a depth function, which  is essential for the study of depth contours and deepest elements, median-type objects. 
Upper-semicontinuity ensures that the depth value remains stable when the point at which the depth is calculated is perturbed slightly, preventing upward jumps of the depth under small perturbations. In fuzzy spaces, where observations may be approximated through discretized membership functions or finite collections of \(\alpha\)-levels, such stability is especially relevant. Upper-semicontinuity also guarantees the closeness of every depth inner region, which is a valuable tool when working with depth functions \citep{zuoserflingcontours}. In the fuzzy framework, the closeness is with respect to a given topology induced by a chosen metric on the fuzzy space. 
Uniform consistency is the corresponding inferential property. As with any theoretical depth function, the population fuzzy Tukey depth depends on the distribution of the underlying fuzzy distribution, which is unknown in applications. Hence, it must be approximated by its empirical version. Pointwise consistency is useful, but insufficient for many depth-based procedures, since rankings, central regions, medians and outlier detection depend on the depth function over a whole class of fuzzy sets. Uniform consistency of the sample version to the population depth provides the stronger guarantee that the empirical depth approximates the population depth globally, providing theoretical justification for approximating the true depth values with those obtained from the sample. 

\citet{ZuoSerfling} proved the upper-semicontinuity property for a general family of multivariate depths, which includes Tukey depth, and \citet{Donoho92} demonstrated that the sample version of the, multivariate, Tukey depth is consistent with respect to its population counterpart.
 In addition to the multivariate Tukey depth, examples of multivariate depth functions that satisfy these properties are the projection depth \citep{ZuoProjection} and the simplicial depth \citep{Liu,Dumbgen}. 
Furthermore,  the random Tukey depth, multivariate \citep{randomTukey} and functional \citep{rfTd},  is  consistent with respect to its population counterpart. 
 The importance of these  properties is such that they are axioms in the notion of statistical depth in functional spaces. Examples of functional depth that satisfy these properties are the band depth \citep{lopezpintado} and the half-region depth \citep{lopezpintadohalf}. These properties have not yet been studied, though, in the fuzzy setting for any depth function.  

{\bf Contributions:} The main contributions of this paper are to establish upper-semicontinuity and uniform consistency of the Tukey depth for fuzzy data. 
 In proving upper-semicontinuity of the fuzzy Tukey depth, we show that it results in  controlling the behaviour of the 
 associated compact \(\alpha\)-levels under convergence in the relevant fuzzy metrics. 
To prove that the Tukey depth for fuzzy sets is uniformly consistent with respect to its sample version, we prove the supremum over all the fuzzy sets of the absolute difference between the population and the sample version converges to $0$ almost surely,{ whenever the  considered theoretical fuzzy distribution is continuous.}

Furthermore, as a consequence of these results, we study the asymptotic behaviour of the empirical 1-median of a fuzzy distribution. Taking into account that the univariate median is the minimizer of the expectation of the difference of an absolute value, \citet{medianfuzzy1} introduced the concept of $1$-median of a fuzzy distribution as the fuzzy sets that minimize the expectation of a fuzzy distance based on  the $L^1$ distance. In \cite{quintoarticulo} it is proved that the Tukey depth for fuzzy sets is maximized at $1$-medians. Combining this connection with the upper-semicontinuity and uniform consistency results obtained here, we derive the convergence of empirical median-type estimators. In particular, when the theoretical fuzzy distribution is continuous,  the 1-median is unique, and the distance between the empirical and population 1-medians converges to zero in a fuzzy metric.

The theoretical analysis is complemented by a computational component. As for the, multivariate, Tukey depth \citep{randomTukey}, exact computation of fuzzy Tukey depth is generally infeasible, since its definition involves an infimum over the unit sphere on $\mathbb R^p$ and the interval [0,1]. 
The infimum over the unit sphere on $\mathbb R^p$ can be solved following the ideas of the random Tukey depth in \citet{randomTukey}.
However, even in the univariate fuzzy case, where the unit sphere is finite, the continuum of the interval [0,1] remains. Therefore, practical implementation requires a finite approximation. We propose to approximate the depth by replacing the interval \([0,1]\) with a finite grid. 
We prove that, for the common case of triangular fuzzy numbers, this approximation converges to the exact fuzzy Tukey depth as the grid becomes finer.

{We include a  simulation study that demonstrates the practical implications of the theoretical results. We consider triangular fuzzy random variables for which the theoretical 1-median of the underlying distribution is either available in closed form or can be computed numerically. In the first setting, the known population 1-median serves as a benchmark for comparison with the fuzzy observation in the sample that attains the highest empirical approximated Tukey depth. In the second setting, the empirical fuzzy observation with maximal depth is regarded as a representative central element of the sample. The simulations illustrate how the empirical approximated depth evolves as the sample size increases and how the theoretical convergence results are reflected in finite-sample behavior.}

{\bf Organization of the paper.} Section~\ref{Preliminaries} introduces the notation and preliminary material on fuzzy sets. 
Section~\ref{main} proves the upper-semicontinuity of the fuzzy Tukey depth, which results in the closeness of the depth $\alpha$-levels, and establishes the almost sure uniform consistency of its empirical version. The same section also derives the asymptotic convergence of the empirical 1-median.  Section~\ref{pf} includes the proofs of the results in the previous section.
Section~\ref{compt} is dedicated to the computability of the fuzzy Tukey depth. It proposes a computational approximation of the depth and studies its consistency, theoretically and through a simulation study. The proofs of the results in Section~\ref{compt} and further simulations are deferred to the Appendix. 
Finally, Section~\ref{conclusions} contains concluding remarks and discusses possible extensions.

	\section{Preliminaries, on fuzzy sets}\label{Preliminaries}
This section provides the necessary definitions, notation and results that will be used throughout the paper.
A fuzzy set $A$ on $\mathbb R^p$ is defined as a mapping $A: \mathbb{R}^{p}\rightarrow [0,1]$, where for each $x\in\mathbb R^p$, $A(x)$ represents the membership degree of $x$ with respect to $A$. Given any $\alpha\in (0,1]$, the $\alpha$-level (or $\alpha$-cut) of a fuzzy set $A$, denoted by $A_\alpha$, is the subset of $\mathbb R^p$ whose elements have membership degree greater or equal than $\alpha$, that is, the set $$A_\alpha := \{x\in\mathbb R^p : A(x)\geq\alpha\}.$$ The $0$-level, denoted by $A_0$, is defined as the closure of the set of elements in $\mathbb R^p$ whose membership degree is strictly greater than $0$, that is the set $$A_0 := \text{clo}(\{x\in\mathbb R^p : A(x)>0\}),$$ where $\text{clo}(\cdot)$ denotes the closure of a set with respect to the usual topology in $\mathbb R^p$. We denote by $\mathcal K_c(\mathbb R^p)$ the class of compact, convex and non-empty subsets of $\mathbb R^p$, by $\mathcal{F}(\mathbb{R}^{p})$ the set of fuzzy sets on $\mathbb{R}^{p}$ and by $\mathcal{F}_{c}(\mathbb{R}^{p})$ the set of fuzzy sets on $\mathbb R^p$, $A$, with $\alpha$-levels compact convex and non-empty, that is, $A_\alpha\in\mathcal K_c(\mathbb R^p)$ for every $\alpha\in [0,1]$. This paper focuses  on $\mathcal{F}_{c}(\mathbb{R}^{p})$. For simplicity, we refer to the elements of $\mathcal{F}_{c}(\mathbb{R}^{p})$ as fuzzy sets, although this is a sligth abuse of nomenclature.

If we identify each subset $L\subseteq\mathbb R^p$ with its indicator function, $I_L:\mathbb R^p\rightarrow\{0,1\}$, defined as $I_L(x) = 1$ if $x\in L$ and $I_L(x) = 0$ if $x\not\in L$, then any subset of $\mathbb R^p$ can be viewed as a fuzzy set. Thus, $L$ can be regarded as an element of $\mathcal F(\mathbb R^p)$ via its indicator function, $I_L$. Furthermore, every $K\in\mathcal K_c(\mathbb R^p)$ can be interpreted as an element of $\mathcal F_c(\mathbb R^p)$. Thus, abusing of the terminology, we assume that $\mathcal{K}_{c}(\mathbb{R})\subseteq\mathcal{F}_{c}(\mathbb{R})$.
The unit sphere on $\mathbb R^p$ is denoted by $\mathbb{S}^{p-1} := \{x\in\mathbb{R}^{p}: \|x\|\leq 1 \}$, where $\|.\|$ is the euclidean norm on $\mathbb{R}^{p}$.
The support function of $K\in\mathcal{K}_{c}(\mathbb{R}^{p})$ is defined as a mapping $s_{K}: \mathbb{S}^{p-1}\rightarrow\mathbb{R}$ with $$s_{K}(u) := \sup\{\langle u,k\rangle : k\in K\}\mbox{
for every }u\in\mathbb{S}^{p-1},$$ where $\langle \cdot ,\cdot\rangle$ denotes the usual inner product in $\mathbb{R}^{p}$. Given a fuzzy set $A$, the support function of $A$ is the mapping $s_A:\mathbb S^{p-1}\times [0,1]\rightarrow\mathbb R$ defined by 
\begin{equation*}
s_A(u,\alpha) =: \sup\{\langle u,a\rangle: a\in A_\alpha\}. 
\end{equation*}

Let us denote by $(\Omega,\mathcal{A},\mathbb{P})$ a probability space. According to \citet{Mol} a compact and convex random set is a function $$\Gamma:\Omega\rightarrow\mathcal{K}_{c}(\mathbb{R}^{p})\mbox{ such that }\{\omega\in\Omega : \Gamma(\omega)\cap K\neq\emptyset \}\in\mathcal{A}\mbox{ for all }K\in\mathcal{K}_{c}(\mathbb{R}^{p}).$$  \citet{PuriRalescu} defined a fuzzy random variable as a function $\mathcal{X}:\Omega\rightarrow\mathcal{F}_{c}(\mathbb{R}^{p})$ such that $\mathcal{X}_{\alpha}(\omega)$ is a random compact set for all $\alpha\in[0,1]$, where $\mathcal{X}_{\alpha}:\Omega\rightarrow\mathcal{P}(\mathbb{R}^{p})$ is defined as $$\mathcal{X}_{\alpha}(\omega) := \{x\in\mathbb{R}^{p}: \mathcal{X}(\omega)(x)\geq\alpha  \}\mbox{ for any }\omega\in\Omega.$$
It is not explicit in the above definition of fuzzy random variable that it is a measurable function in the ordinary sense. It is easy to see that $\mathcal{X}$ is a fuzzy random variable if and only if it is measurable when  the fuzzy space $\mathcal{F}_{c}(\mathbb{R}^{p})$ is endowed with the $\sigma$-algebra generated by the $\alpha$-cut mappings $$L_\alpha:A\in\mathcal{F}_{c}(\mathbb{R}^{p})\mapsto A_\alpha\in\mathcal{K}_{c}(\mathbb{R}^{p}).$$ This is the smallest $\sigma$-algebra which makes each mapping $L_\alpha$ measurable. \citet{kra} showed that this $\sigma$-algebra is the Borel $\sigma$-algebra generated by any metric of the families $d_r$ or $\rho_r$ for $r\in[1,\infty)$.
Finally, given a fuzzy random variable $\mathcal{X}: \Omega\rightarrow\mathcal{F}_{c}(\mathbb{R}^{p})$, the support function of $\mathcal X$, denoted by $s_{\mathcal{X}}$, is a function $s_{\mathcal{X}} : \mathbb{S}^{p-1}\times [0,1]\times\Omega\rightarrow\mathbb{R}$ defined by
\begin{equation}\label{srv}
	s_{\mathcal{X}}(u,\alpha,\omega) := s_{\mathcal{X}(\omega)}(u,\alpha), \mbox{ for all }u\in\mathbb{S}^{p-1}, \alpha\in [0,1]\mbox{ and }\omega\in\Omega.
\end{equation}
For every $u\in\mathbb{S}^{p-1}$ and $\alpha\in [0,1]$, by definition of fuzzy random variable, $s_{\mathcal{X}}(u,\alpha)$ is a real random variable.
%
%
%
%

We denote by $L^{0}[\mathcal{F}_{c}(\mathbb{R}^{p})]$ the class of all fuzzy random variables on the measurable space $(\Omega,\mathcal{A}).$ 
Additionally, 
\begin{equation*}C^{0}[\mathcal{F}_{c}(\mathbb{R}^{p})]:=\{\mathcal{X}\in L^{0}[\mathcal{F}_{c}(\mathbb{R}^{p})]: s_{\mathcal{X}}(u,\alpha) \mbox{ is  continuous  for each } (u,\alpha)\in\mathbb{S}^{p-1}\times [0,1]\}\end{equation*}
and
$L^1[\pfc(\R^p)]:=\{\mathcal X\in L^{0}[\mathcal{F}_{c}(\mathbb{R}^{p})]: E[\|\mathcal X\|]<\infty\}.$
 We refer to the fuzzy random variables in  $C^{0}[\mathcal{F}_{c}(\mathbb{R}^{p})]$ as continuous and to those in
$L^1[\pfc(\R^p)]$ as  integrably bounded.

	\section{Main results}\label{main}

The Tukey depth for fuzzy sets \citep{primerarticulo}, or fuzzy Tukey depth,  is a generalization of the multivariate halfspace depth, in Equation \eqref{TukeyR}, which is based on the support functions of fuzzy sets. Thus, given the underlying probability space $(\Omega,\mathcal{A},\mathbb{P})$, the Tukey depth, $D_{FT}$, based on $\mathcal{H}\subset L^{0}[\mathcal{F}_{c}(\mathbb{R}^{p})]$ and $\mathcal{J}\subset\mathcal{F}_{c}(\mathbb{R}^{p})$ of a fuzzy set $A\in\mathcal{J}$ with respect to $\mathcal{X}\in\mathcal{H}$ is defined as
\begin{equation}\label{TukeyF}
D_{FT}(A;\mathcal{X}) := \inf_{u\in\mathbb{S}^{p-1}, \alpha\in [0,1]} \min(\mathbb{P}[\omega\in\Omega: \mathcal{X}(\omega)\in S_{u,\alpha}^{-}],\mathbb{P}[\omega\in\Omega: \mathcal{X}(\omega)\in S_{u,\alpha}^{+}]),
\end{equation}
where
\begin{equation}
	\begin{aligned}\nonumber
		S_{u,\alpha}^{-} &= \{U\in\mathcal{F}_{c}(\mathbb{R}^{p}) : s_{U}(u,\alpha) - s_{A}(u,\alpha)\leq 0\}\\ \nonumber
		S_{u,\alpha}^{+} &= \{U\in\mathcal{F}_{c}(\mathbb{R}^{p}) : s_{U}(u,\alpha) - s_{A}(u,\alpha)\geq 0\}.
	\end{aligned}
\end{equation}
	In this section we study the continuity of the fuzzy Tukey depth, in addition to results on fuzzy median and the computability of the fuzzy Tukey depth.
	
	\subsection{Upper-semicontinuity} 

As commented in the introduction, it is important to consider if depth functions are continuous functions on the first argument 
or, at least, if they are upper semicontinuous functions. Upper-semicontinuity was introduced in \citet{Serflingcondiciones} for multivariate depths with the following definition. A function $D:\mathbb R^p\rightarrow\mathbb R$ is upper-semicontinuous if for all $x\in\mathbb R^p,$ it happens that
	\begin{equation}\nonumber
		\limsup_{\|x-y\|\rightarrow 0}D(y)\leq D(x).
	\end{equation}
	Upper-semicontinuity of the multivariate Tukey depth is proved in \citet{Donoho92}.
	While the property is not studied for the Tukey depth in  functional spaces, because the depth is degenerate in such spaces \citep{indios},  it is studied for compact and convex 
 	sets.
 For compact and convex 
  sets, upper-semicontinuous is defined with respect to a metric. Given a function $D:\mathcal K_c(\mathbb R^p)\rightarrow \mathbb R$ and a metric $d$  on $\mathcal K_c(\mathbb R^p)$, we say that $D$ is upper-semicontinuous if for all $K\in\mathcal K_c(\mathbb R^p)$ and for any sequence $\{K_n\}_n\subset\mathcal K_c(\mathbb R^p)$ such that $\lim_n d(K,K_n) = 0$, we have that 
\begin{equation}\nonumber
	\limsup_n D(K_n)\leq D(K).
\end{equation}
A common metric to use is the Hausdorff metric, $d_H : {\mathcal K_c(\mathbb R^p)\times \mathcal K_c(\mathbb R^p)}\rightarrow [0,\infty),$ defined by
\begin{equation}\label{hm}
	d_H (K,T) := \max\left\{\sup_{k\in K}\inf_{t\in T} \parallel k-t\parallel, \sup_{t\in T}\inf_{k\in K}\parallel k-t\parallel\right\}
\end{equation}
for any $K,T{\in\mathcal K_c(\mathbb{R}^{p})}$.

 Given the underlying probability space $(\Omega,\mathcal A,\mathbb P)$, 
the Tukey depth, $D_{CT}$, of $K{\in\mathcal K_c(\mathbb{R}^{p})}$ with respect to a compact and convex random set $\Gamma:\Omega\rightarrow\mathcal K_c(\mathbb R^p)$ is defined in \cite{segundoarticulo} as the function
	\begin{equation}\label{CT}
		D_{CT}(K;\Gamma) := \min\{\inf_{u\in\mathbb{S}^{p-1}}\mathbb{P}(s_{\Gamma}(u)\leq s_{K}(u)),\inf_{u\in\mathbb{S}^{p-1}}\mathbb{P}(s_{\Gamma}(u)\geq s_{K}(u))\}.
	\end{equation}
The proof of the main result in this subsection, Theorem \ref{TukeyUpperSemi}, 	makes use of the upper-semicontinuity of the Tukey depth for compact and convex random sets, which we include in Proposition \ref{propositionTukeyCompact} below for the sake of completeness. 
	\begin{proposition}[\citet{segundoarticulo}]\label{propositionTukeyCompact}
For any compact and convex random set $\Gamma,$ the depth function $D_{CT}(\cdot;\Gamma)$ is upper-semicontinuous with respect to the $d_H-$metric.
\end{proposition}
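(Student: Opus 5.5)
The plan is to reduce upper-semicontinuity to a statement about each fixed direction, and then use the fact that an infimum of upper-semicontinuous functions is upper-semicontinuous.

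Fix $\Gamma$ and a direction $u\in\mathbb S^{p-1}$. Define
\[
f_u(K)=\mathbb P(s_\Gamma(u)\le s_K(u)),\qquad g_u(K)=\mathbb P(s_\Gamma(u)\ge s_K(u)).
\]
Then
\[
D_{CT}(K;\Gamma)=\min\Bigl\{\inf_u f_u(K),\ \inf_u g_u(K)\Bigr\}.
\]
An infimum, and a minimum, of any family of upper-semicontinuous functions is again upper-semicontinuous. Indeed, if $K_n\to K$, then for every $u$
\[
\limsup_n \inf_v f_v(K_n)\le \limsup_n f_u(K_n)\le f_u(K),
\]
and taking the infimum over $u$ gives the claim. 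So it suffices to prove that each $f_u$ and each $g_u$ is upper-semicontinuous in $K$ for the metric $d_H$.

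The first tool is the support-function estimate
\[
|s_K(u)-s_{K_n}(u)|\le d_H(K,K_n),
\]
valid for every $u$. It follows because $K\subseteq K_n+d_H(K,K_n)B$, where $B$ is the closed unit ball, and symmetrically with $K$ and $K_n$ exchanged. Consequently $d_H(K_n,K)\to0$ implies $t_n:=s_{K_n}(u)\to t:=s_K(u)$.

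Next, write $F(x)=\mathbb P(s_\Gamma(u)\le x)$, the distribution function of the real random variable $s_\Gamma(u)$. This variable is measurable because $\Gamma$ is a random compact convex set. We have $f_u(K_n)=F(t_n)$. Since $F$ is nondecreasing and right-continuous, $\limsup_n F(t_n)\le F(t)$ whenever $t_n\to t$. To see this, for each $\varepsilon>0$ we eventually have $t_n\le t+\varepsilon$, so $\limsup_n F(t_n)\le F(t+\varepsilon)$, which tends to $F(t)$ as $\varepsilon\downarrow0$. Similarly, $G(x)=\mathbb P(s_\Gamma(u)\ge x)$ is nonincreasing and left-continuous, because $\{s_\Gamma\ge x\}=\bigcap_{y\uparrow x}\{s_\Gamma\ge y\}$. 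Hence $\limsup_n G(t_n)\le G(t-\varepsilon)$ for every $\varepsilon>0$, and letting $\varepsilon\downarrow0$ gives $\limsup_n G(t_n)\le G(t)$.

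Combining the two steps, each $f_u$ and each $g_u$ is upper-semicontinuous, and the reduction in the first paragraph then gives the proposition.

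The only delicate point is the direction of the inequalities. The sets in the definition of $D_{CT}$ are closed, with $\le$ and $\ge$, which is exactly what makes $F$ right-continuous and $G$ left-continuous, the correct one-sided continuity for a $\limsup$ bound. With strict inequalities the argument would fail.
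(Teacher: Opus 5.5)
Your proof is correct and follows essentially the same route as the argument the paper relies on. The paper only cites the proof from \citet{segundoarticulo}, but Step 1 of the proof of Theorem~\ref{TukeyUpperSemi} reuses it: fix $u$, use $s_{K_n}(u)\to s_K(u)$ to get $\limsup_n\{s_\Gamma(u)\ge s_{K_n}(u)\}\subseteq\{s_\Gamma(u)\ge s_K(u)\}$, apply the reverse Fatou inequality, and then pass $\inf_u$ through $\limsup_n$ and take the minimum. Your one-sided continuity of $F$ and $G$ is the same continuity-of-measure fact, phrased through distribution functions.
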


Next, we define upper-semicontinuity for fuzzy sets, following the definition for compact and convex random sets. As the property involves a metric, we consider it here in relation to the notion of metric fuzzy depth function. 
\begin{definition}\label{USF}
Given a metric $d$ defined over $\mathcal F_c(\mathbb R^p)$ and a function $D:\mathcal F_c(\mathbb R^p)\rightarrow\mathbb R$, we say that $D$ is \emph{upper-semicontinuous} if for all  $A\in\mathcal F_c(\mathbb R^p)$ and for any sequence $\{A_n\}_n\subseteq\mathcal F_c(\mathbb R^p)$ such that $\lim_n d(A,A_n) = 0$, we have that 
\begin{equation*}
\limsup_n D(A_n)\geq D(A).
\end{equation*}
\end{definition}
%
%
%
%
%

A well-known family of metrics  over $\mathcal{F}_{c}(\mathbb{R}^{p})$ is based on the Hausdorff metric over the $\alpha$-levels, in Equation \eqref{hm}, and we refer to it as 
the $d_{r}$ family. 
Given two fuzzy sets $A,B\in\mathcal{F}_{c}(\mathbb{R}^{p})$, the family of metrics $d_r$, with $r\in [1,\infty]$, is
\begin{equation*}
d_{r}(A,B) := \left\{ \begin{array}{lcc}
	\left(\int_{0}^{1} \left( d_H (A_\alpha,B_\alpha) \right)^{r} d\nu(\alpha)\right)^{1/r}, &  r\in [1,\infty)\\
	\\ \sup_{\alpha\in [0,1]} d_H  (A_\alpha,B_\alpha), &   r = \infty
\end{array}
\right.
\end{equation*}
where $\nu$ denotes the Lebesgue measure over the closed interval $[0,1].$ 
\citet{diamondkloden} demonstrated that  $(\mathcal{F}_{c}(\mathbb{R}^{p}), d_{r})$ is a non-complete and separable metric space for any $r\in(1,\infty)$, while $(\mathcal{F}_{c}(\mathbb{R}^{p}), d_{\infty})$ is a non-separable and complete metric space.

Next result states the upper-semicontinuity of the Tukey depth for fuzzy sets with respect to the $d_1$ metric. 
\begin{theorem}\label{TukeyUpperSemi}
	For any $\mathcal X\in L^0[\mathcal F_c(\mathbb R^p)]$, the depth function $D_{FT}(\cdot;\mathcal X)$ is upper semicontinuous with respect to the $d_1$-metric. 
\end{theorem}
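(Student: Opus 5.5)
The plan is to prove the standard upper-semicontinuity inequality $\limsup_n D_{FT}(A_n;\mathcal X)\le D_{FT}(A;\mathcal X)$ whenever $d_1(A_n,A)\to 0$. This is the direction meant by the Serfling-type definition and by Proposition \ref{propositionTukeyCompact}; the ``$\geq$'' in Definition \ref{USF} appears to be a typo.

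\textbf{Reductions.} First pass to a subsequence along which $D_{FT}(A_n;\mathcal X)$ converges to the $\limsup$. Since $d_1(A_n,A)=\int_0^1 d_H(A_{n,\alpha},A_\alpha)\,d\nu(\alpha)\to0$, the functions $\alpha\mapsto d_H(A_{n,\alpha},A_\alpha)$ converge to $0$ in $L^1(\nu)$. Passing to a further subsequence, they converge to $0$ for all $\alpha$ outside a $\nu$-null set $N$. Because $|s_K(u)-s_T(u)|\le d_H(K,T)$, we then get $s_{A_n}(u,\beta)\to s_A(u,\beta)$ uniformly in $u$, for every $\beta\notin N$. Now fix $\varepsilon>0$ and a pair $(u,\alpha)$ such that
\[
\min\bigl(\mathbb P(s_{\mathcal X}(u,\alpha)\le s_A(u,\alpha)),\,\mathbb P(s_{\mathcal X}(u,\alpha)\ge s_A(u,\alpha))\bigr)\le D_{FT}(A;\mathcal X)+\varepsilon .
\]
It then suffices to show that, for a suitable choice of levels $\beta_n$,
\[
\limsup_n \mathbb P\bigl(s_{\mathcal X}(u,\beta_n)\le s_{A_n}(u,\beta_n)\bigr)\le \mathbb P\bigl(s_{\mathcal X}(u,\alpha)\le s_A(u,\alpha)\bigr),
\]
and likewise with ``$\ge$''. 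This is enough because $D_{FT}(A_n;\mathcal X)$ is bounded above by the value of the objective at $(u,\beta_n)$, and $\limsup\min\le\min\limsup$.

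\textbf{Main obstacle.} The near-minimizing level $\alpha$ may lie in the null set $N$, where nothing is known about $s_{A_n}(u,\alpha)$. We handle this with the monotone continuity of $\alpha$-levels. For $\alpha>0$, the levels $F_\beta$ of any $F\in\mathcal F_c(\mathbb R^p)$ decrease as $\beta\uparrow\alpha$ with $\bigcap_{\beta<\alpha}F_\beta=F_\alpha$. Hence $F_\beta\to F_\alpha$ in $d_H$ (nested compact sets), so $\beta\mapsto s_F(u,\beta)$ is left-continuous. For $\alpha=0$, the levels $F_\beta$ increase as $\beta\downarrow0$ to a set whose closure is $F_0$, giving right-continuity at $0$.

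\textbf{Construction of $\beta_n$.} For $\alpha>0$, choose $\beta_k\uparrow\alpha$ with $\beta_k\notin N$; for $\alpha=0$, choose $\beta_k\downarrow0$ with $\beta_k\notin N$. For each $k$ pick $n_k$ increasing such that $|s_{A_n}(u,\beta_k)-s_A(u,\beta_k)|<1/k$ for all $n\ge n_k$. Then set $\beta_n:=\beta_k$ for $n_k\le n<n_{k+1}$. Writing $t_n:=s_{A_n}(u,\beta_n)$, continuity of $s_A(u,\cdot)$ at $\alpha$ from the relevant side gives $t_n\to s_A(u,\alpha)$. Applying the same one-sided continuity to each $\mathcal X(\omega)$ gives $s_{\mathcal X}(u,\beta_n,\omega)\to s_{\mathcal X}(u,\alpha,\omega)$ for every $\omega$.

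\textbf{Closing step.} Suppose $\omega$ lies in infinitely many of the events $\{s_{\mathcal X}(u,\beta_n)\le t_n\}$. Letting $n\to\infty$ along those indices yields $s_{\mathcal X}(u,\alpha,\omega)\le s_A(u,\alpha)$. So $\limsup_n$ of the events is contained in the limit event, and the reverse Fatou lemma gives the required inequality; the ``$\ge$'' case is identical. Since $\varepsilon$ was arbitrary, the theorem follows. No continuity assumption on $\mathcal X$ is used, consistent with the statement holding for all $\mathcal X\in L^0[\mathcal F_c(\mathbb R^p)]$.
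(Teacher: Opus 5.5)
Your argument is correct, and your reading of the direction matches what the paper actually derives. Although Definition~\ref{USF} and the first line of Step~2 are written with $\geq$, the paper's chain of inequalities ends with $D_{FT}(A;\mathcal X)\geq\limsup_n D_{FT}(A_n;\mathcal X)$.

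Your route is organized differently from the paper's. The paper proceeds in two modular steps.
\begin{enumerate}
\item It proves a reduction lemma: $D_{FT}(B;\mathcal X)=\inf_{\alpha\in N}D_{CT}(B_\alpha;\mathcal X_\alpha)$ for every $B\in\mathcal F_c(\mathbb R^p)$ and every dense $N\subseteq[0,1]$. The proof uses the same one-sided monotone continuity of levels and reverse Fatou argument that you use.
\item It takes $N$ to be the full-measure set where $d_H((A_{n''})_\alpha,A_\alpha)\to 0$ along a subsequence. It then invokes the upper semicontinuity of the random-set depth $D_{CT}$ (Proposition~\ref{propositionTukeyCompact}) at each $\alpha\in N$, and exchanges $\inf_{\alpha\in N}$ with $\limsup_n$.
\end{enumerate}
You instead fix a near-minimizing pair $(u,\alpha)$ for $A$ and diagonalize. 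Your levels $\beta_n$ avoid the bad null set while approaching $\alpha$ from the side on which $s_F(u,\cdot)$ is continuous. A single application of reverse Fatou, for one fixed direction $u$, then replaces both the density lemma and the external proposition. No interchange of an infimum with a $\limsup$ is needed.

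Each approach has its own advantage. Yours is self-contained and shows that only convergence of $s_{A_n}(u,\cdot)$ at good levels near the near-optimal $\alpha$ matters. The paper's is shorter given Proposition~\ref{propositionTukeyCompact} and presents the result as a lift of the compact-set result. It also isolates the lemma $D_{FT}=D_N$, which the paper reuses in the proof of Theorem~\ref{teoremaaproximacion} to restrict the infimum to dyadic levels.
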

The proofs of the results in this section are in Section \ref{pf}.

Another well-known family of fuzzy metrics is the family of  $L^{r}$-type metrics \citep{diamondkloden}.
\begin{equation*}
	\rho_{r}(A,B) := \left(\int_{\mathbb{S}^{p-1}}\int_{[0,1]}\|s_{A}(u,\alpha)-s_{B}(u,\alpha)\|^{r}d\nu(\alpha)\, d\mathcal V_p(u)\right)^{1/r}
\end{equation*}
for every $A,B\in\mathcal{F}_{c}(\mathbb{R}^{p})$ with $r\in [1,\infty)$, where $\mathcal V_p$ denotes the uniform normalized distribution over the unit sphere $\mathbb S^{p-1}$.

Since  upper-semicontinuity in Definition \ref{USF} is a property that depends on the topology induced by the metric on the space $\mathcal F_c(\mathbb R^p)$, using Theorem \ref{TukeyUpperSemi} we prove below upper-semicontinuity for $D_{FT}$ with respect to $d_\infty$ and $d_r$ and $\rho_r$ for any $r\in [1,\infty).$ 
	
	\begin{corollary}\label{corolarioupper}
		For any $\mathcal X\in L^0[\mathcal F_c(\mathbb R^p)]$, the depth function $D_{FT}(\cdot;\mathcal X)$ is upper semicontinuous with respect to the metric $d_\infty$ and with respect to $d_r$ and $\rho_r$ metrics, for any $r\in [1,\infty).$
	\end{corollary}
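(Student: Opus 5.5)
The plan is to reduce everything to Theorem \ref{TukeyUpperSemi}. Upper-semicontinuity in the sense of Definition \ref{USF} depends only on which sequences converge. So it suffices to show, for each metric $d$ in the list, that every sequence $\{A_n\}_n$ with $d(A,A_n)\to 0$ also satisfies $d_1(A,A_n)\to 0$. Once this is established, Theorem \ref{TukeyUpperSemi} gives the required inequality for $\limsup_n D_{FT}(A_n;\mathcal X)$ directly.

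For the $d_r$ family and $d_\infty$ this is a comparison of norms on $[0,1]$ with respect to the probability measure $\nu$. Since $\nu([0,1])=1$, Hölder's (or Jensen's) inequality gives
\[
d_1(A,B)\le d_r(A,B)\le d_\infty(A,B)
\]
for all $A,B\in\mathcal F_c(\mathbb R^p)$ and $r\in[1,\infty)$. Hence convergence in $d_r$ or in $d_\infty$ implies convergence in $d_1$, and this case is done.

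The $\rho_r$ metrics are the main obstacle, because $d_1$ uses the Hausdorff distance, i.e. the supremum over $u$ of the support-function differences, while $\rho_r$ only averages over $u$. By the same Hölder argument on the probability space $(\mathbb S^{p-1}\times[0,1],\mathcal V_p\otimes\nu)$ we have $\rho_1\le\rho_r$, so it suffices to show that $\rho_1(A,A_n)\to0$ implies $d_1(A,A_n)\to0$. I would proceed in two steps.

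\emph{Step 1 (uniform boundedness).} $\rho_1(A,A_n)\to 0$ bounds $\int_{\mathbb S^{p-1}}\int_0^1|s_{A_n}(u,\alpha)|\,d\nu\,d\mathcal V_p$. Since $s_{A_n}(\cdot,\alpha)$ is nonincreasing in level via nestedness and $A_{n,\alpha}\subseteq A_{n,0}$, the key point is a bound on $\sup_u s_{A_n}(u,\alpha)$ by a constant times the spherical average for each fixed $\alpha$. For a convex body, the width in the direction $u$ is controlled by the average width, using the mean width together with Lipschitz control of support functions. Moreover, $\int_{\mathbb S^{p-1}} |s_K(u)-s_L(u)|\,d\mathcal V_p(u)\to0$ forces $d_H(K,L)\to0$ uniformly on bounded families, since support functions of sets in a bounded family are uniformly Lipschitz on the sphere and the $L^1$ and sup norms are equivalent on equi-Lipschitz families.

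\emph{Step 2 (passage to $d_1$).} Write $f_n(\alpha)=\int|s_{A_n}-s_A|(u,\alpha)\,d\mathcal V_p$, so that $\int_0^1 f_n\,d\nu\to0$. The estimate from Step 1 gives
\[
d_H(A_{n,\alpha},A_\alpha)\le \phi\bigl(f_n(\alpha)\bigr)
\]
for some modulus $\phi$ depending on a bound $M_\alpha$ for the radii. Near $\alpha=0$ the sets may be unbounded along the sequence, so I would truncate at $\alpha\ge\varepsilon$, where the radii are controlled by integrability. Then $f_n\to0$ in measure implies $d_H(A_{n,\alpha},A_\alpha)\to0$ in measure on $[\varepsilon,1]$, and uniform integrability yields $d_1(A,A_n)\to0$.

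If this uniform integrability fails, the fallback is to prove upper-semicontinuity for $\rho_r$ directly. This would mimic the proof of Theorem \ref{TukeyUpperSemi}: pass to a subsequence along which $A_{n,\alpha}\to A_\alpha$ in $d_H$ for a.e. $\alpha$, and then use Proposition \ref{propositionTukeyCompact} levelwise.
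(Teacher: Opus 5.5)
Your reduction is the same as the paper's. Upper-semicontinuity depends only on which sequences converge, so it suffices that convergence in each listed metric implies $d_1$-convergence, after which Theorem \ref{TukeyUpperSemi} applies.

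\textbf{Where your route differs.} For $d_\infty$ both of you use $d_1\le d_\infty$. For $d_r$ the paper goes through $\rho_r$: it uses $\rho_1\le\rho_r$ together with the equivalences $d_1\sim\rho_1$ and $d_r\sim\rho_r$, quoted from Diamond--Kloeden. Your bound $d_1\le d_r\le d_\infty$, from Jensen on $([0,1],\nu)$, is more direct and needs no external equivalence result. For $\rho_r$ you also reduce to $\rho_1$ via $\rho_1\le\rho_r$. The paper then simply cites $d_1\sim\rho_1$, whereas you try to prove $\rho_1(A,A_n)\to0\Rightarrow d_1(A,A_n)\to0$ yourself. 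That makes the proof self-contained, at the cost of a Vitale-type estimate.

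\textbf{Closing the step you left open.} Your argument works, but it stops exactly where you hedge on uniform integrability. The hedge is unnecessary, because the Step 1 inequality already gives domination.

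First, the Step 1 bound has a one-line proof, cleaner than the mean-width remark. For $K\in\mathcal K_c(\mathbb R^p)$ pick $k^*\in K$ with $\|k^*\|=\|K\|:=\sup_{k\in K}\|k\|$. Then $s_K(u)\ge\langle u,k^*\rangle$, so $\|K\|\le c_p^{-1}\int|s_K|\,d\mathcal V_p$ with $c_p:=\int\max(0,u_1)\,d\mathcal V_p(u)>0$.

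Second, set $r_0:=\|A_0\|$. Then
\[
d_H\bigl(A_\alpha,(A_n)_\alpha\bigr)\le\|(A_n)_\alpha\|+\|A_\alpha\|\le c_p^{-1}f_n(\alpha)+(c_p^{-1}+1)\,r_0 .
\]
Since $\int_0^1 f_n\,d\nu=\rho_1(A,A_n)\to0$, the family is dominated by an $L^1$-convergent sequence. So no truncation at $\alpha\ge\varepsilon$ is needed.

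Third, the same bound shows that whenever $f_n(\alpha)\le1$, both sets lie in a ball whose radius depends only on $r_0$ and $p$. Your Lipschitz/cap estimate then gives $d_H\le C f_n(\alpha)^{1/p}$ there. When $f_n(\alpha)>1$ the display gives $d_H\le Cf_n(\alpha)$. Integrating and applying Jensen yields
\[
d_1(A,A_n)\le C\bigl(\rho_1(A,A_n)+\rho_1(A,A_n)^{1/p}\bigr),
\]
with $C$ depending only on $A$ and $p$.

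With this, your main route is complete. The fallback is superfluous, and it would need the same levelwise implication $\delta_1\to0\Rightarrow d_H\to0$ anyway.
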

	
	Upper-semicontinuity implies that the $\alpha$-levels of $D_{FT}$ are closed subsets of $\mathcal F_c(\mathbb R^p)$ for the correponding metrics.
		\begin{corollary}
			Let $\mathcal X\in L^0[\mathcal F_c(\mathbb R^p)]$ be a fuzzy random varible and $\alpha\geq 0$. The $\alpha$-level of $D_{FT}(\cdot;\mathcal X)$, defined as
			\begin{equation}\nonumber
				\{A\in\mathcal F_c(\mathbb R^p) : D_{FT}(A;\mathcal X)\geq\alpha\},
			\end{equation}
			is a closed set with respect to the metric $d_\infty$ and with respect to the metrics $d_r$ and $\rho_r$ for any $r\in [1,\infty)$.
		\end{corollary}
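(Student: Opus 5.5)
This corollary follows directly from Corollary \ref{corolarioupper}, and I will use the standard characterization of closedness through sequences. Fix $\alpha\geq 0$ and one of the metrics $d\in\{d_\infty\}\cup\{d_r,\rho_r: r\in[1,\infty)\}$. Denote the level set by $R_\alpha:=\{A\in\mathcal F_c(\mathbb R^p): D_{FT}(A;\mathcal X)\geq\alpha\}$. Since $(\mathcal F_c(\mathbb R^p),d)$ is a metric space, it suffices to show that $R_\alpha$ is sequentially closed. So take $\{A_n\}_n\subseteq R_\alpha$ and $A\in\mathcal F_c(\mathbb R^p)$ with $d(A_n,A)\to 0$, and show that $A\in R_\alpha$.

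By Corollary \ref{corolarioupper}, $D_{FT}(\cdot;\mathcal X)$ is upper semicontinuous with respect to $d$. I will use this in its genuine upper-semicontinuity form,
\begin{equation*}
\limsup_n D_{FT}(A_n;\mathcal X)\leq D_{FT}(A;\mathcal X).
\end{equation*}
This is the convention of \citet{Serflingcondiciones} and of Proposition \ref{propositionTukeyCompact}. The inequality as printed in Definition \ref{USF} has the direction reversed, which appears to be a typo. With the reversed inequality the closedness conclusion would not follow, so the step to be careful about is making sure that Theorem \ref{TukeyUpperSemi} and Corollary \ref{corolarioupper} are indeed established in the $\limsup\leq$ sense.

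Given that, every $A_n$ satisfies $D_{FT}(A_n;\mathcal X)\geq\alpha$. Hence
\begin{equation*}
\alpha\leq\limsup_n D_{FT}(A_n;\mathcal X)\leq D_{FT}(A;\mathcal X),
\end{equation*}
so $A\in R_\alpha$. Therefore $R_\alpha$ contains all its limit points and is closed in $(\mathcal F_c(\mathbb R^p),d)$. Since $d$ was an arbitrary metric from the list, the statement holds for $d_\infty$ and for every $d_r$ and $\rho_r$, $r\in[1,\infty)$.
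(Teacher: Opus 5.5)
Your proof is correct and is the same argument the paper relies on: the paper gives no separate proof and only remarks that upper semicontinuity (Corollary \ref{corolarioupper}) makes the superlevel sets closed, which you make explicit with the sequential argument. Your worry about the reversed inequality in Definition \ref{USF} resolves in your favour, because the final chain in Step 2 of the proof of Theorem \ref{TukeyUpperSemi} actually establishes $D_{FT}(A;\mathcal X)\geq\limsup_n D_{FT}(A_n;\mathcal X)$, even though the sentence introducing that step announces the opposite inequality.
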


\subsection{Consistency of fuzzy depth instances }


Another desirable property that is studied in the statistical depth functions' literature is the consistency of the empirical depth. In particular, the uniform almost surely convergence between the empirical version of the depth and its population counterpart. 
Thus, in \citet{zuoserflingcontours} it is proved for the multivariate Tukey depth that
	\begin{equation}\nonumber
		\sup_{x\in\mathbb R^p}|HD_n(x;\{X_1,\ldots ,X_n\}) - HD(x;X)|\longrightarrow 0,\text{ almost surely }[P],
	\end{equation}
where $\{X_1,\ldots ,X_n\}$ is a set of i.i.d. random variables with distribution $X$ and 
\begin{equation}\label{HDn}
HD_n(x;\{X_1,\ldots ,X_n\}) := \inf\{ \mathbb P_n(H) : H\text{ is a closed halfspace with }x\in H \}
\end{equation} is the sample multivariate Tukey depth 
with $\mathbb P_n$  the empirical probability measure associated with  $X_1,\ldots ,X_n$.

To establish the property in the fuzzy framework, let $\mathcal{X}$ be a fuzzy random variable associated with a probabilistic space $(\Omega ,\mathcal{A} ,\mathbb{P})$ and $\mathcal{X}_{1},\cdots ,\mathcal{X}_{n}$  a simple random sample from $\mathcal{X}.$ 
 By \citet[Proposition 4.4]{medibilidadpedromiriam}, due to measurability arguments, we have that $s_{\mathcal{X}_{1}}(u,\alpha),\cdots ,s_{\mathcal{X}_{n}}(u,\alpha)$ is a simple random sample of $s_{\mathcal{X}}(u,\alpha)$ for all $u\in\mathbb{S}^{p-1}$ and $\alpha\in [0,1]$. In next definition, which presents the sample, or empirical, version of the fuzzy Tukey depth,  we make use of  the empirical distribution $\mathbb{P}_{u,\alpha}^{n}(A) := \sharp (A\cap\{s_{\mathcal{X}_{1}}(u,\alpha),\cdots ,s_{\mathcal{X}_{n}}(u,\alpha)\})/n$ for every $A\in\mathcal{A}$.



\begin{definition}
	The sample	 fuzzy Tukey depth, $D_{FT}^{n}$, based on $\mathcal{H}\subset L^{0}[\mathcal{F}_{c}(\mathbb{R}^p)]$  and $\mathcal{J}\subset\mathcal{F}_{c}(\mathbb{R}^p)$ of a fuzzy set $A\in\mathcal{J}$ with respect to  a simple random sample $\mathscr{X}:=\{\mathcal{X}_{1},\ldots ,\mathcal{X}_{n}\}$ from $\mathcal{X}\in\mathcal{H}$
	 is 
	$$D_{FT}^{n}(A;\mathscr{X}) := \min\{\inf_{u,\alpha}\mathbb{P}_{u,\alpha}^{n}((-\infty,s_A(u,\alpha)]), \inf_{u,\alpha}\mathbb{P}_{u,\alpha}^{n}([s_A(u,\alpha),\infty))  \}.$$
\end{definition}

In \cite{capituloTukey} it is proved that $D_{FT}$ can be express using the halfspace depth in $\mathbb{R}$, $HD$ in Equation \eqref{TukeyR}, as
\begin{equation}\label{ecuacion1Tukey}
	\begin{aligned}
		&D_{FT}(A;\mathcal{X}) = \inf_{u\in\mathbb S^{p-1}}\inf_{\alpha\in [0,1]} HD(s_{A}(u,\alpha);s_{\mathcal{X}}(u,\alpha)) =\\
		&\inf_{u\in\mathbb S^{p-1}}\inf_{\alpha\in [0,1]} \min\{\mathbb P(s_{\mathcal X}(u,\alpha)\leq s_A(u,\alpha)),\mathbb P(s_\mathcal X(u,\alpha)\geq s_A(u,\alpha)) \}
	\end{aligned}
\end{equation}
for every fuzzy random variable $\mathcal{X}\in L^{0}[\mathcal{F}_{c}(\mathbb{R}^{p})]$ and $A\in\mathcal{F}_{c}(\mathbb{R}^{p})$.
Following it, we also write the sample  fuzzy Tukey depth in terms of the sample halfspace depth in $\mathbb{R},$ in Equation \eqref{HDn}, 
as
\begin{equation}\label{ecuacion2Tukey}
	\begin{aligned}
		&D_{FT}^{n}(A;\mathscr{X}) = \inf_{u\in\mathbb S^{p-1}}\inf_{\alpha\in [0,1]} HD_{n}(s_{A}(u,\alpha); \{s_{\mathcal{X}_{1}}(u,\alpha),\cdots ,s_{\mathcal{X}_{n}}(u,\alpha)\}) =\\
		&\inf_{u\in\mathbb{S}^{p-1}}\inf_{\alpha\in [0,1]} \min\{\mathbb P_{u,\alpha}^n((-\infty,s_A(u,\alpha)]),\mathbb P_{u,\alpha}^n([s_A(u,\alpha),\infty)) \}.
	\end{aligned}
\end{equation}

We make use of these expressions to prove next result,  which  is the main result of uniform consistency for the sample fuzzy Tukey depth. We state the result for continuous fuzzy random variables.

\begin{theorem}\label{fuzzyTukeyconsistency}
	Given the underlying probability space $(\Omega,\mathcal{A},\mathbb{P})$, let $\mathcal{X}$ be a fuzzy random variable in $C^0[\pfc(\R^p)]$ and $\mathscr{X}:=\{\mathcal{X}_{1},\cdots ,\mathcal{X}_{n}\}$ be i.i.d. as $\mathcal{X}$. Then,
	$$\sup_{A\in\mathcal{F}_{c}(\mathbb{R}^{p})} |D_{FT}(A;\mathcal{X}) - D_{FT}^{n}(A;\mathscr{X})|\longrightarrow 0, \mbox{ almost surely } [\mathbb P].$$
\end{theorem}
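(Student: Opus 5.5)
The plan is to reduce the statement to a uniform Glivenko--Cantelli property for the family of real random variables $\{s_{\mathcal X}(u,\alpha)\}_{(u,\alpha)}$, using the representations \eqref{ecuacion1Tukey} and \eqref{ecuacion2Tukey}. Write
\[
F_{u,\alpha}(t)=\mathbb P(s_{\mathcal X}(u,\alpha)\le t), \qquad G_{u,\alpha}(t)=\mathbb P(s_{\mathcal X}(u,\alpha)\ge t),
\]
and let $F^n_{u,\alpha},G^n_{u,\alpha}$ be their empirical counterparts built from $s_{\mathcal X_1}(u,\alpha),\dots,s_{\mathcal X_n}(u,\alpha)$. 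Two elementary facts give
\[
\sup_{A}|D_{FT}(A;\mathcal X)-D^n_{FT}(A;\mathscr X)|\le \sup_{u,\alpha,t}\max\{|F^n_{u,\alpha}(t)-F_{u,\alpha}(t)|,\,|G^n_{u,\alpha}(t)-G_{u,\alpha}(t)|\}.
\]
The first is $|\inf f-\inf g|\le\sup|f-g|$ and $|\min(a,b)-\min(c,d)|\le\max(|a-c|,|b-d|)$. The second is that $s_A(u,\alpha)$ is just some real number $t$, so replacing it by an arbitrary $t$ removes the supremum over $A\in\mathcal F_c(\mathbb R^p)$. It therefore suffices to show that the right-hand side tends to $0$ almost surely. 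I will treat $F$ only; $G$ is symmetric.

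\textbf{Step 1: reduce to a finite set of indices.} Fix $\eps>0$. I use the standing assumption $\mathcal X\in C^0[\pfc(\R^p)]$, which I read as saying that each $s_{\mathcal X}(u,\alpha)$ has a continuous (atomless) distribution and that $(u,\alpha)\mapsto s_{\mathcal X}(u,\alpha)$ is continuous. By the second property, $\|\mathcal X_0\|$ is finite, so I choose $M$ with $\mathbb P(\|\mathcal X_0\|>M)<\eps$. On the event $\{\|\mathcal X_0\|\le M\}$, the map $u\mapsto s_{\mathcal X}(u,\alpha)$ is $M$-Lipschitz uniformly in $\alpha$. Moreover, $\alpha\mapsto s_{\mathcal X}(u,\alpha)$ is nonincreasing because the $\alpha$-levels are nested. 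I take a finite $\delta$-net $\{u_i\}$ of $\mathbb S^{p-1}$ with $M\delta<\eps$ and a finite grid $0=\alpha_0<\dots<\alpha_m=1$. For $u$ near $u_i$ and $\alpha\in[\alpha_{j-1},\alpha_j]$, off the exceptional event I get the sandwich
\[
s_{\mathcal X}(u_i,\alpha_j)-\eps\le s_{\mathcal X}(u,\alpha)\le s_{\mathcal X}(u_i,\alpha_{j-1})+\eps .
\]
Consequently $F^n_{u,\alpha}(t)$ is bounded above and below by empirical frequencies at the finitely many bracket indices, with $t\pm\eps$, up to the empirical frequency of $\{\|\mathcal X_{k,0}\|>M\}$. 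The same bounds hold for $F_{u,\alpha}(t)$ in population.

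\textbf{Step 2: finitely many classical Glivenko--Cantelli statements.} For each of the finitely many variables $s_{\mathcal X}(u_i,\alpha_j)$, the classical Glivenko--Cantelli theorem gives $\sup_t|F^n-F|\to0$ almost surely. The strong law of large numbers handles $\mathbb P_n(\|\mathcal X_0\|>M)$. A finite intersection of almost-sure events is almost sure. Combining these with the sandwich from Step 1, the limsup of the supremum is bounded by $3\eps$ plus the bracket widths. One width is $\sup_t\big(F_{u_i,\alpha_j}(t+\eps)-F_{u_i,\alpha_{j-1}}(t-\eps)\big)$; the other is the analogous width in the $u$ direction.

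\textbf{Step 3 (main obstacle): make the bracket widths small.} This is where continuity is indispensable. Atomless distributions alone do not make $\sup_t$ of these differences small uniformly over $t$, $u_i$ and $j$. My first attempt will use the joint continuity of $(u,\alpha)\mapsto s_{\mathcal X}(u,\alpha)$ on the compact set $\mathbb S^{p-1}\times[0,1]$. By dominated convergence, this gives $\mathbb P\big(\sup_{|\alpha-\beta|<\eta,\,\|u-v\|<\eta}|s_{\mathcal X}(u,\alpha)-s_{\mathcal X}(v,\beta)|>\eps\big)\to0$ as $\eta\to0$. Then the $\alpha$-bracket width is at most $\sup_t\big(F_{u_i,\alpha_j}(t+2\eps)-F_{u_i,\alpha_j}(t-2\eps)\big)+\eps$. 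I expect this to be the hard point. A further limit $\eps\to0$ requires the distributions $F_{u,\alpha}$ to be equicontinuous on the compact index set. I will obtain this by a Dini-type argument: each $F_{u,\alpha}$ is continuous, and $(u,\alpha)\mapsto F_{u,\alpha}$ is continuous in the sup-norm because of the joint continuity of $s_{\mathcal X}$ and Pólya's theorem. Should that fail, the fallback is to choose the grid $\{\alpha_j\}$ adaptively through quantiles, as in the proof of the classical Glivenko--Cantelli theorem. Letting $\eps\downarrow0$ along a countable sequence then finishes the proof.
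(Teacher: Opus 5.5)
\textbf{The proposal does not prove the stated theorem.} Your argument depends on reading $C^0[\pfc(\R^p)]$ as also requiring $(u,\alpha)\mapsto s_{\mathcal X}(u,\alpha)$ to be continuous. The paper's class only asks that each real random variable $s_{\mathcal X}(u,\alpha)$ have a continuous, i.e.\ atomless, distribution; this is how it is used in the paper's Step~3 and in Section~\ref{simulations}. Continuity in $u$ is automatic, since $u\mapsto s_{\mathcal X}(u,\alpha)$ is Lipschitz with constant $\|\mathcal X_0\|$. Continuity in $\alpha$, i.e.\ $d_H$-continuity of $\alpha\mapsto\mathcal X_\alpha$, is a genuinely extra hypothesis, and your Step~3 cannot do without it. With only atomless marginals, $(u,\alpha)\mapsto F_{u,\alpha}$ need not be sup-norm continuous and $\{F_{u,\alpha}\}$ need not be equicontinuous. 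No finite grid in $\alpha$, adaptive or not, then makes your bracket widths small, so the quantile fallback does not help either.

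\textbf{The gap cannot be closed, because the statement with the paper's $C^0$ is false.} Take $p=1$, and let $U,V,W$ be uniform on $(0,1)$ and $\epsilon_1,\epsilon_2,\dots$ i.i.d.\ uniform on $\{-1,1\}$, all independent. Put $a_k=1/2+2^{-k}$, $I_k=(a_{k+1},a_k]$, $r_k(\alpha)=(\alpha-a_{k+1})/(a_k-a_{k+1})$, and
\[
\mathcal X_\alpha=[-V,\sigma(\alpha)],\qquad
\sigma(\alpha)=\begin{cases}1+U, & \alpha\in[0,1/2],\\ 1-2^{-k}+\epsilon_k2^{-k-2}+2^{-k-5}W\,(2-r_k(\alpha)), & \alpha\in I_k,\ k\ge 1.\end{cases}
\]
One checks the following:
\begin{itemize}
\item $\sigma$ is nonincreasing and left-continuous with values in $(0,2)$, so $\mathcal X$ is a fuzzy random variable.
\item Every $s_{\mathcal X}(\pm1,\alpha)$ is atomless, so $\mathcal X\in C^0$ in the paper's sense.
\end{itemize}
Now let $A_\alpha=[-1/2,\tau(\alpha)]$ with $\tau=3/2$ on $[0,1/2]$ and $\tau=1-2^{-k}$ on $I_k$. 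On $I_k$ one has $\sigma(\alpha)<\tau(\alpha)$ exactly when $\epsilon_k=-1$. Hence every probability in \eqref{ecuacion1Tukey} equals $1/2$, and $D_{FT}(A;\mathcal X)=1/2$. But for every $n$, by Borel--Cantelli, almost surely some $k$ has $\epsilon_k=+1$ for all $n$ sample elements. For that $k$, no $s_{\mathcal X_i}(1,a_k)$ lies in $(-\infty,\tau(a_k)]$, so $D^n_{FT}(A;\mathscr X)=0$. Thus even pointwise consistency fails.

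\textbf{The paper's own proof fails at \eqref{contrad}.} Failure of \eqref{prob} only yields $\mathbb P(\sup_{u,\alpha}\sup_t|F^+-F^+_{n'}|>\eps)>\eps$. One cannot pull $\sup_{u,\alpha}$ outside $\mathbb P$ to obtain deterministic $(u_{n'},\alpha_{n'})$. In fact, for atomless marginals $\sup_t|F^+(t,u,\alpha)-F^+_n(t,u,\alpha)|$ is a Kolmogorov--Smirnov statistic whose law does not depend on $(u,\alpha)$. So \eqref{contrad} can never hold, and Steps~2--5 only re-prove the one-index Glivenko--Cantelli theorem. There is a second problem in the decreasing case. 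In the example the bad levels $a_k$ decrease to $1/2$, and the limit set $\text{clo}\{\mathcal X>1/2\}$ has upper endpoint $1$, a point mass. So Requirement~(c) is unavailable there.

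\textbf{With the extra hypothesis stated, your route works and differs from the paper's.} Make the assumption explicit: almost surely $\alpha\mapsto\mathcal X_\alpha$ is $d_H$-continuous, in addition to atomless marginals. This covers the triangular examples of Section~\ref{simulations}. Under it your finite-net bracketing argument is sound. It combines:
\begin{itemize}
\item finitely many Glivenko--Cantelli statements;
\item the strong law for the exceptional event;
\item equicontinuity of $\{F_{u,\alpha}\}$, obtained via P\'olya's theorem and compactness.
\end{itemize}
This gives almost sure convergence directly. It avoids the paper's detour through convergence in probability, Egorov's theorem, and the empirical-process upgrade of Step~7. State the assumption explicitly rather than reading it into $C^0$, and drop the fallback.
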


\begin{remark}
	In the multivariate case, uniform consistency follows from the fact that halfspaces are a Vapnik--Chervonenkis class. In our framework, the analogs of halfspaces are not a Vapnik--Chervonenkis class, however. 
	For this purpose, we have developed a new technique to prove in Section \ref{pf} the uniform consistency for the empirical fuzzy Tukey depth with respect to its population counterpart.
\end{remark}


\subsection{Consistency of the fuzzy median}

Let  $\widetilde{\text{Med}}(\mathcal X)\in\mathcal F_c(\mathbb R)$ be defined by
$$
(\widetilde{\text{Med}}(\mathcal X))_\alpha := [\text{Med}(\inf\mathcal X_\alpha),\text{Med}(\sup\mathcal X_\alpha)]
$$ 
for every $\alpha\in [0,1]$, where $\text{Med}$ denotes the univaritate median, using the convention of taking the mid-point if the set of univariate medians of real-valued random variables is an interval. \citet{medianfuzzy1} proposed the notion of \textit{$1$-median} of a fuzzy random variable and proved that the fuzzy set $\widetilde{\text{Med}}(\mathcal X)\in\mathcal F_c(\mathbb R)$  is a $1$-median of the fuzzy random variable $\mathcal X$ \cite[Theorem 4.1]{medianfuzzy1}. 
A fuzzy set $A\in\mathcal F_c(\mathbb R)$ is a $1$-median of $\mathcal X$ if
$$
A\in\arg\min_{B\in\mathcal F_c(\mathbb R)}\text{E}[\rho_1(B,\mathcal X)],
$$
{
whenever this expectation exists.} 
Additionally, 
 \cite{quintoarticulo} proved that the set of $1$-medians of a fuzzy random variable $\mathcal X$ coincides with the set 
 that maximizes $D_{FT}(\cdot;\mathcal X)$. 
 
Following ideas in \citet{arcones}, next theorem proves that the sequence of empirical maximizers of $D_{FT}^n$ converges almost surely to the maximizer of $D_{FT}$ with {respect to the $d_\infty$ metric}, assuming that the maximizer of $D_{FT}$ is unique. 
Thus, the result shows that, if the 1-median of the corresponding fuzzy random variable is unique, then the empirical median converges (almost surely) to its population counterpart with {respect to the $d_\infty$ metric}. 
The subsequent corollary proves the convergence with respect to the $\rho_r$ and $d_r$ metrics for every $r\in [1,\infty).$ The result is for continuous fuzzy random variables in $\mathbb R^p.$
Thus,  our results generalize the consistency of the empirical $1$-median to the population $1$-median proved in \cite{medianfuzzy1} (only considered for $p = 1$ and the metric $\rho_1$).


\begin{theorem}\label{medianateorema} 
	Given the underlying probability space $(\Omega,\mathcal{A},\mathbb{P}),$ let $\mathcal X\in C^0[\mathcal F_c(\mathbb R^p)]$ be a fuzzy random variable such that $D_{FT}(\cdot;\mathcal X)$ is uniquely maximize in $M\in\mathcal F_c(\mathbb R^p)$. For each $n\in \mathbb N,$ let $\mathscr{X}:\{\mathcal{X}_{1},\cdots ,\mathcal{X}_{n}\}$ be i.i.d. as $\mathcal{X}$ and $M_n$ denote a maximizer of $D_{FT}^n(\cdot; \mathscr{X}).$ 
	Then,
	\begin{equation}\nonumber
		d_\infty(M,M_n)\longrightarrow 0, \text{ almost surely } [\mathbb{P}].
	\end{equation}
\end{theorem}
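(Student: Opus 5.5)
The plan is the standard argmax-consistency argument in the style of \citet{arcones}. It combines the uniform consistency of Theorem \ref{fuzzyTukeyconsistency} with the upper-semicontinuity of Theorem \ref{TukeyUpperSemi} and Corollary \ref{corolarioupper} (for $d_\infty$), plus a compactness step.

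First, write $D=D_{FT}(\cdot;\mathcal X)$ and $D_n=D_{FT}^n(\cdot;\mathscr X)$, and let $\Delta_n:=\sup_A|D(A)-D_n(A)|$, so that $\Delta_n\to0$ almost surely. Since $M_n$ maximizes $D_n$,
$$D(M_n)\ge D_n(M_n)-\Delta_n\ge D_n(M)-\Delta_n\ge D(M)-2\Delta_n .$$
Hence $D(M_n)\to D(M)=\sup D$ almost surely, and we fix an $\omega$ in this probability-one event.

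Second, we argue by contradiction. Suppose $d_\infty(M_n,M)\not\to0$. Then along a subsequence $d_\infty(M_{n_k},M)\ge\eps>0$ while $D(M_{n_k})\to D(M)$. We want a further subsequence converging in $d_\infty$ to some $A$. Upper-semicontinuity would then give $D(A)\ge\limsup D(M_{n_k})=D(M)$. Note that the paper's Definition \ref{USF} has the inequality reversed by a typo; we use the genuine direction $\limsup_n D(A_n)\le D(A)$, which is what Theorem \ref{TukeyUpperSemi} delivers. So $A$ is a maximizer with $d_\infty(A,M)\ge\eps$, contradicting uniqueness.

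Third, and this is the main obstacle, we must supply relative compactness of $\{M_n\}$ in $(\mathcal F_c(\R^p),d_\infty)$. That space is not locally compact, so this needs an argument.

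\emph{Uniform boundedness.} For large $k$ we have $D(M_{n_k})\ge D(M)/2>0$; here $D(M)>0$, since the maximal Tukey depth of the median-type element is at least $1/2$ coordinatewise in the univariate case and in general bounded below. Tightness of the real variables $s_{\mathcal X}(u,0)$ and $s_{\mathcal X}(u,1)$, uniformly in $u$ via a finite net, then shows that depth $\ge\delta$ forces $|s_{M_n}(u,\alpha)|\le R_\delta$ for all $u,\alpha$. Indeed, if $s_{M_n}(u,\alpha)$ were huge, then $\mathbb P(s_{\mathcal X}(u,\alpha)\ge s_{M_n}(u,\alpha))$ would be small.

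\emph{Equicontinuity.} To pass to $d_\infty$ we also need equicontinuity of $\alpha\mapsto s_{M_n}(u,\alpha)$ uniformly in $n$ and $u$. The plan is to get it from continuity of $\mathcal X$. Because $s_{\mathcal X}(u,\alpha)$ is a continuous random variable, $(u,\alpha)\mapsto$ its distribution function is jointly well behaved. Since $M$ is the unique deepest element, the depth condition pins $s_{M_n}(u,\alpha)$ near the univariate median $m(u,\alpha)$ of $s_{\mathcal X}(u,\alpha)$, with $D(M_n)\to\sup D$. This is the delicate point: uniform closeness of $s_{M_n}$ to a fixed quantile band may in fact give $\sup_{u,\alpha}|s_{M_n}-s_M|\to0$ directly, bypassing compactness.

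Concretely, we try first to show that for every $\eps>0$ there is $\eta>0$ such that $\sup_{u,\alpha}|s_A(u,\alpha)-s_M(u,\alpha)|\ge\eps$ implies $D(A)\le D(M)-\eta$. This is well-separatedness of the maximum, in the sense of Arcones. It can be obtained by contradiction as well: take a sequence $A_j$ violating it, and extract a convergent subsequence of the support-function values at the bad points $(u_j,\alpha_j)\to(u^*,\alpha^*)$ in the compact set $\mathbb S^{p-1}\times[0,1]$. Continuity of the distribution of $s_{\mathcal X}$ then passes the depth bound to the limit, contradicting uniqueness. Since $d_\infty(A,M)=\sup_{u,\alpha}|s_A-s_M|$ by the Hörmander embedding, combining this separation with $D(M_n)\to D(M)$ yields $d_\infty(M_n,M)\to0$ almost surely.
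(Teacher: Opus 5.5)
Your skeleton is the paper's: the chain $D(M_n)\ge D(M)-2\Delta_n$ plus a separation constant. The paper calls this constant $\delta_\varepsilon$ in \eqref{delta} and simply asserts that it is positive, from uniqueness, upper-semicontinuity and vanishing at infinity. You are right that this is the crux, since $(\mathcal F_c(\mathbb R^p),d_\infty)$ is far from locally compact, but your proof of separation does not work. Extracting $(u_j,\alpha_j)\to(u^*,\alpha^*)$ and $s_{A_j}(u_j,\alpha_j)\to t^*$ shows at best that $t^*$ is a median of some limiting univariate law. It produces no second element of $\mathcal F_c(\mathbb R^p)$ of maximal depth, so uniqueness of $M$ is never contradicted. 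Uniqueness of the maximizer does not imply uniqueness of the medians of the variables $s_{\mathcal X}(u,\alpha)$, let alone a uniform margin around them. The hypothesis $\mathcal X\in C^0[\mathcal F_c(\mathbb R^p)]$ only makes each of these variables atomless. For $p>1$ the maximizer is not the pointwise median at all: for random singletons $\{X\}$ in $\mathbb R^2$, the map $u\mapsto\mathrm{Med}\langle u,X\rangle$ is odd, hence a support function only if $X$ is halfspace symmetric. The equicontinuity route fails too, because $\alpha\mapsto M_\alpha$ can have jumps.

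In fact separation in $d_\infty$ is false under the stated hypotheses, and so is the conclusion. Take $p=1$, a fair coin $k\in\{0,1\}$, $b,b'\sim U[0,1]$ and $L\sim\mathrm{Exp}(1)$, all independent. Set $\mathcal X_\alpha=[b'-5+\alpha,R(\alpha)]$, with $R(\alpha)=b$ if $k=0$ and $R(\alpha)=2+b-\min(1,\alpha L)$ if $k=1$.
\begin{itemize}
\item Each $s_{\mathcal X}(\pm1,\alpha)$ is atomless, and for $\alpha>0$ the median of $R(\alpha)$ is uniquely $1$.
\item Since $A_0=\mathrm{clo}\bigcup_{\alpha>0}A_\alpha$ for every fuzzy set, the unique maximizer is $M_\alpha=[\alpha-4.5,1]$, with depth $\frac12$. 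This holds even though $R(0)$ has median interval $[1,2]$.
\item Replacing the right endpoint of $M_\alpha$ by $1.5$ for $\alpha\le 1/j$ gives $A_j$ with $D_{FT}(A_j;\mathcal X)\ge\frac12-\frac12e^{-j/2}$ and $d_\infty(A_j,M)=\frac12$. Hence $\delta_\varepsilon=0$ for $\varepsilon<\frac12$.
\item The pointwise sample median is a fuzzy set of sample depth at least $\frac12$. So whenever fewer than $n/2$ of the $k_i$ vanish (infinitely often, almost surely), every maximizer $M_n$ of $D_{FT}^n$ has $s_{M_n}(1,0)\ge 2$, i.e.\ $d_\infty(M_n,M)\ge1$.
\end{itemize}
So neither your argument nor the paper's can be completed as stated, and an additional hypothesis is needed. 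For $p=1$, one option is a uniform median margin: $\inf_{u,\alpha}\min\{\frac12-F_{u,\alpha}(m_{u,\alpha}-\varepsilon),F_{u,\alpha}(m_{u,\alpha}+\varepsilon)-\frac12\}>0$ for each $\varepsilon>0$. Here $F_{u,\alpha}$ and $m_{u,\alpha}$ are the distribution function and median of $s_{\mathcal X}(u,\alpha)$; this is the analogue of $c_\varepsilon$ in the proof of Theorem \ref{teoremamedianasaprox}. It yields $\delta_\varepsilon>0$ directly.
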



\begin{corollary}\label{medianacorolario} 
	Given the underlying probability space $(\Omega,\mathcal{A},\mathbb{P}),$ let $\mathcal X\in C^0[\mathcal F_c(\mathbb R^p)]$ be a fuzzy random variable such that $D_{FT}(\cdot;\mathcal X)$ is uniquely maximize in $M\in\mathcal F_c(\mathbb R^p)$. For each $n\in \mathbb N,$ let $\mathscr{X}:=\{\mathcal{X}_{1},\cdots ,\mathcal{X}_{n}\}$ be i.i.d. as $\mathcal{X}$ and $M_n$ denote a maximizer of  $D_{FT}^n(\cdot; \mathscr{X}).$ 
	Then, 
\begin{equation}\nonumber
		\rho_r(M,M_n), d_r(M,M_n)\longrightarrow 0, \text{ almost surely } [\mathbb{P}], \mbox{ for every } r\in [1,\infty).
\end{equation}

\end{corollary}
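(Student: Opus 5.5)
The plan is to deduce Corollary \ref{medianacorolario} directly from Theorem \ref{medianateorema}. We show that, on the almost sure event where $d_\infty(M,M_n)\to 0$, convergence also holds for every $d_r$ and $\rho_r$, $r\in[1,\infty)$. It therefore suffices to prove the deterministic comparison inequalities
\[
d_r(A,B)\le d_\infty(A,B),\qquad \rho_r(A,B)\le d_\infty(A,B)
\]
for all $A,B\in\mathcal F_c(\mathbb R^p)$.

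The first inequality is immediate from the definitions. Since $\nu$ is a probability measure on $[0,1]$ and $d_H(A_\alpha,B_\alpha)\le \sup_{\beta} d_H(A_\beta,B_\beta)$ for every $\alpha$,
\[
\Big(\int_0^1 d_H(A_\alpha,B_\alpha)^r\,d\nu(\alpha)\Big)^{1/r}\le d_\infty(A,B).
\]

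For the second inequality we use the classical identity (Hörmander's embedding) valid for $K,T\in\mathcal K_c(\mathbb R^p)$:
\[
d_H(K,T)=\sup_{u\in\mathbb S^{p-1}}|s_K(u)-s_T(u)|.
\]
Here $\mathbb S^{p-1}$ is taken as the unit sphere; the supremum over the closed ball gives the same value, since $s_K-s_T$ is positively homogeneous. Applying this at each level gives $|s_A(u,\alpha)-s_B(u,\alpha)|\le d_H(A_\alpha,B_\alpha)\le d_\infty(A,B)$ for all $(u,\alpha)$. Integrating the $r$-th power against the probability measure $\nu\otimes\mathcal V_p$ then yields $\rho_r(A,B)\le d_\infty(A,B)$.

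Combining the two inequalities, $0\le \rho_r(M,M_n),\,d_r(M,M_n)\le d_\infty(M,M_n)\to0$ almost surely, by Theorem \ref{medianateorema}. The exceptional null set does not depend on $r$, so the convergence holds simultaneously for all $r$.

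The only point that needs care is the support-function representation of the Hausdorff distance. It is standard for nonempty compact convex sets and applies because every $\alpha$-level lies in $\mathcal K_c(\mathbb R^p)$. Everything else is a direct monotonicity argument.
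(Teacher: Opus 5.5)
Your proposal is correct and is essentially the intended argument. The paper gives no separate proof of this corollary, but it is meant to follow from Theorem \ref{medianateorema} through the bounds $d_r\le d_\infty$ and $\rho_r\le d_\infty$, exactly as you do; your direct bound $\rho_r\le d_\infty$ via the support-function formula for $d_H$ (with your correct remark that the paper's $\mathbb{S}^{p-1}$, written as the closed ball, gives the same supremum by positive homogeneity) is slightly cleaner than the $d_r$--$\rho_r$ equivalence from Diamond and Kloeden invoked in the proof of Corollary \ref{corolarioupper}.
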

	\begin{remark}
		Note that the sequence of sample medians $\{M_n\}_n,$ in Theorem \ref{medianateorema} and Corollary \ref{medianacorolario}, is not necessarily unique.
	\end{remark}

\section{Proofs of the results in Section \ref{main}}\label{pf}

\begin{proof}[Proof of Theorem \ref{TukeyUpperSemi}] 
	Let $(\Omega,\mathcal{A},\mathbb{P})$ be a probabilistic space associated with the fuzzy random variable $\mathcal{X}$. Let $N\subseteq [0,1]$. Set
	$$D_{N}(A;\mathcal{X}) := \inf_{\alpha\in N} D_{CT}(A_{\alpha};\mathcal{X}_{\alpha})$$
	for each $A\in\mathcal{F}_{c}(\mathbb{R}^{p}),$ where $D_{CT}$ is provided in Equation \eqref{CT}.
	 Clearly, $D_{FT}(A;\mathcal{X}) = D_{[0,1]}(A;\mathcal{X})$.
	
	{\it Step 1.}  We will prove $D_{CT}=D_{N}$ for any dense $N\subseteq [0,1]$. Fix $A\in\mathcal{F}_{c}(\mathbb{R}^{p})$ and $\alpha\in [0,1]$.
	
	We first assume $\alpha > 0$. As $N$ is a dense set, there exists an increasing sequence $\{\alpha_{n}\}_{n}\subseteq N$ such that $\alpha_{n} \to \alpha$. Then, we have that $A_{\alpha_{n+1}}\subseteq A_{\alpha_{n}}$ for all $n\in\mathbb{N}$. Thus, $A_\alpha = \bigcap_{n}\downarrow A_{\alpha_{n}}$. If we consider the Hausdorff metric over compact sets, $d_{\mathcal{H}}$, we have that $\lim_{n}d_{\mathcal{H}}(A_{\alpha},A_{\alpha_{n}}) = 0$. Analogously, we have that for all $\omega\in\Omega$, $\mathcal{X}_{\alpha}(\omega) = \bigcap_{n}\downarrow \mathcal{X}_{\alpha_{n}}(\omega)$. Thus, $\lim_{n} d_{\mathcal{H}}(\mathcal{X}_{\alpha}(\omega),\mathcal{X}_{\alpha_{n}}(\omega)) = 0$.
	Since
	\begin{equation}\nonumber
		d_{\mathcal{H}}(K,L) = \sup_{u\in\mathbb{S}^{p-1}} |s_{K}(u) - s_{L}(u)|
	\end{equation}
	for all $K,L\in\mathcal{K}_{c}(\mathbb{R}^{p})$,
	\begin{equation}
		\begin{aligned}\nonumber
			\lim_{n} s_{A_{\alpha_{n}}}(u) &= s_{A_{\alpha}}(u), \text{ for all } u\in\mathbb{S}^{p-1}, \\ \nonumber
			\lim_{n} s_{\mathcal{X}_{\alpha_{n}}(\omega)} &= s_{\mathcal{X}_{\alpha}(\omega)}(u), \text{ for all } u\in\mathbb{S}^{p-1} \text{ and for all } \omega\in\Omega.
		\end{aligned}
	\end{equation}
	Now, for every $u\in\mathbb{S}^{p-1}$ we have the following inequalities:
	\begin{equation}
		\begin{aligned}\nonumber
			\mathbb{P}(s_{\mathcal{X}_{\alpha}}(u)&\geq s_{A_{\alpha}}(u))\geq\mathbb{P}(\lim\sup_{n}\{s_{\mathcal{X}_{\alpha_{n}}}(u)\geq s_{A_{\alpha_{n}}}(u)\})\geq \\ \nonumber
			&\geq\lim\sup_{n}\mathbb{P}(s_{\mathcal{X}_{\alpha_{n}}}(u)\geq s_{A_{\alpha_{n}}}(u)),
		\end{aligned}
	\end{equation}
	where the former inequality is due to the same reason as in the proof of Proposition \ref{propositionTukeyCompact} and the latter due to Fatou's lemma. Taking infima over $u$ in both sides,
	\begin{equation}\label{ecuacion1TukeyFuzzyup}
		\begin{aligned}
			\inf_{u}\mathbb{P}(s_{\mathcal{X}_{\alpha}}(u)\geq s_{A_{\alpha}}(u))&\geq\inf_{u}\inf_{n}\sup_{k\geq n}\mathbb{P}(s_{\mathcal{X}_{\alpha_{k}}}(u)\geq s_{A_{\alpha_{k}}}(u))\geq \\
			&\geq\inf_{n}\inf_{u}\mathbb{P}(s_{\mathcal{X}_{\alpha_{n}}}(u)\geq s_{A_{\alpha_{n}}}(u)).
		\end{aligned}
	\end{equation}
	Analogously with the probability $\mathbb{P}(s_{\mathcal{X}_{\alpha}}(u)\leq s_{A_{\alpha}}(u))$, we have that for all $u\in\mathbb{S}^{p-1}$
	\begin{equation}\label{ecuacion2TukeyFuzzyup}
		\inf_{u}\mathbb{P}(s_{\mathcal{X}_{\alpha}}(u)\leq s_{A_{\alpha}}(u))\geq\inf_{n}\inf_{u}\mathbb{P}(s_{\mathcal{X}_{\alpha_{n}}}(u)\leq s_{A_{\alpha_{n}}}(u)).
	\end{equation}
	Taking the minimum in \eqref{ecuacion1TukeyFuzzyup} and \eqref{ecuacion2TukeyFuzzyup} we obtain
	\begin{equation}\nonumber
		D_{CT}(A_{\alpha};\mathcal{X}_{\alpha})\geq\inf_{n} D_{CT}(A_{\alpha_{n}};\mathcal{X}_{\alpha_{n}})\geq D_{N}(A;\mathcal{X}),
	\end{equation}
	where the second inequality is due to the fact that the $\alpha_{n}$ are in $N$.
	
	At this point, we consider the remaining case $\alpha=0$. By the density of $N$, there exists a {\em decreasing} sequence $\{\alpha_{n}\}_{n}\subseteq N$ such that $\alpha_{n} \to  \alpha$. Then $d_{\mathcal{H}}(A_{0},A_{\alpha_{n}}) = 0$. Following the same argument as for $\alpha>0$, we obtain $D_{CT}(A_{0};\mathcal{X}_{0})\geq D_{N}(A;\mathcal{X}).$
	Thus,
	$$D_{FT}(A;\mathcal X))=\inf_{\alpha\in[0,1]}D_{CT}(A_{\alpha};\mathcal{X}_{\alpha})\geq  D_{N}(A;\mathcal{X})\ge D_F(A;\mathcal X))$$
	since $N\subseteq[0,1]$. By the arbitrariness of $A$, indeed $D_{FT}=D_N$.

	{\it Step 2.} 		
	Let $A\in\mathcal{F}_{c}(\mathbb{R}^{p})$ and let $\{A_{n}\}_{n}$ be a sequence such that $\lim_{n}d_{1}(A,A_{n}) = 0$. We have to prove $\limsup_{n} D_{FT}(A_{n};\mathcal{X})\geq D_{FT}(A;\mathcal{X})$.
	
	Take a subsequence $\{A_{n'}\}_{n}$ such that
	\begin{equation}\nonumber
		\lim_{n} D_{FT}(A_{n'};\mathcal{X}) = \limsup_{n} D_{FT}(A_{n};\mathcal{X}).
	\end{equation}
	Consider the maps
	$$f_{n} : \alpha\in[0,1]\mapsto d_{\mathcal{H}}(A_{\alpha},(A_{n'})_\alpha)\in\mathbb{R}.$$
	Notice $\|f_{n_{k}}\|_1 \to 0$. Since $f_{n_k}$ converges to 0 in probability as random variables defined on $[0,1]$ with the Lebesgue measure $\lambda$, we take a further subsequence $\{n''\}_{n}$ such that the convergence is almost sure.
	
	Set $N = \{\alpha\in [0,1] : f_{n''}(\alpha)\rightarrow 0\}$. Since $N$ has full measure, it is dense in $[0,1]$. By Proposition \ref{propositionTukeyCompact}, for each $\alpha\in N$
	\begin{equation}\nonumber
		D_{CT}(A_{\alpha};\mathcal{X}_{\alpha})\geq\limsup_{n}D_{CT}(A_{n'\alpha};\mathcal{X}_{\alpha}),
	\end{equation}
	because $\lim_{n} f_{n''}(\alpha) = 0$ means $\lim_{n} d_{\mathcal{H}}(A,A_{n''}) = 0$. Thus
	$$				D_{FT}(A;\mathcal{X}) = D_{N}(A;\mathcal{X})\geq\inf_{\alpha\in N}\limsup_{n} D_{CT}((A_{n''})_\alpha;\mathcal{X}_{\alpha})$$
	$$= \inf_{\alpha\in N}\inf_{k\in\mathbb{N}}\sup_{n\geq k} D_{CT}(A_{n''})_\alpha;\mathcal{X}_{\alpha})$$
	$$\geq\inf_{k\in\mathbb{N}}\sup_{n\geq k}\inf_{\alpha\in N} D_{CT}(A_{n''})_\alpha;\mathcal{X}_{\alpha}) $$
	$$=\limsup_{n} D_{N}(A_{n''};\mathcal{X}) = \limsup_{n} D_{FT}(A_{n''};\mathcal{X}) $$
	$$=\lim_{n} D_{FT}(A_{n''};\mathcal{X}) =\limsup_{n} D_{FT}(A_{n};\mathcal{X}),$$
	where the first identity is due to Step 1.
\end{proof}

 \begin{proof}[Proof of Corollary \ref{corolarioupper}]
		The metrics $d_1$ and $\rho_1$ are equivalent (see \cite[Theorem 3.]{diamondkloden}). Thus, thanks to Theorem \ref{TukeyUpperSemi}, $D_{FT}(\cdot;\mathcal X)$ is upper semicontinuous with respect to $\rho_1$.
		
		Given any $r\in (1,\infty)$, $A\in\mathcal F_c(\mathbb R^p)$ a fuzzy set and $\{A_n\}_n$ a sequence of fuzzy sets such that $\lim_n\rho_r(A,A_n) = 0$. Observe that $\rho_1\leq\rho_r$ for every $r\in (1,\infty)$. It implies that $\lim_n\rho_1(A,A_n) = 0$. As $D_{FT}(\cdot ;\mathcal X)$ is upper semicontinuous with respect to the $\rho_1$ metric, we conclude that it is also upper semicontinuous with respect to the $\rho_r$ metric for any $r\in (1,\infty)$.
		
		Also by \cite[Theorem 3.]{diamondkloden}, the metrics $d_r$ and $\rho_r$ are equivalent, thus $D_{FT}(\cdot;\mathcal X)$ is upper semicontinuous with respect to the $d_r$ metric, for every $r\in (1,\infty)$.
		
		Finaly, observe that $d_1\leq d_\infty$. Given any fuzzy set $A\in\mathcal F_c(\mathbb R^p)$ and any sequence $\{A_n\}_n$ of fuzzy sets such that $\lim_nd_\infty(A,A_n) = 0$, we have that $\lim_nd_1(A,A_n) = 0$. Thus, by Theorem \ref{TukeyUpperSemi}, we have that $D_{FT}(\cdot;\mathcal X)$ is upper semicontinuous for the $d_\infty$ metric.
\end{proof}

\begin{proof}[Proof of Theorem \ref{fuzzyTukeyconsistency}]
	
	{\it Step 1.}
	Let $\mathcal X$ be a fuzzy random variable. By measurability results, if $\mathcal X_1,\ldots ,\mathcal X_n$ is a simple random sample of $\mathcal X$, we have that $s_{\mathcal X_1}(u,\alpha), \ldots, s_{\mathcal X_n}(u,\alpha)$ is a random sample of the real random variable $s_{\mathcal X}(u,\alpha)$ for all $u\in\mathbb S^{p-1}$ and $\alpha\in [0,1]$. Let us consider $A\in\mathcal F_c(\mathbb R^p)$. We use the following notation throughout all the proof
	\begin{equation}\nonumber
		\begin{aligned}
			&F(s_A (u,\alpha)) := \{\mathbb P(s_\mathcal X(u,\alpha)\leq s_A(u,\alpha)),\mathbb P(s_\mathcal X(u,\alpha)\geq s_A(u,\alpha)) \}\\ \nonumber
			&F_n(s_A(u,\alpha)) := \{\mathbb P_{u,\alpha}^n((-\infty,s_A(u,\alpha)]),\mathbb P_{u,\alpha}^n([s_A(u,\alpha),\infty)) \}
		\end{aligned}
	\end{equation}
	
	Using Equations \eqref{ecuacion1Tukey} and \eqref{ecuacion2Tukey} and properties about supremum and infimum functions, we have
	\begin{equation}\nonumber
		\begin{aligned}
			&|D_{FT}(A;\mathcal X) - D_{FT}^n(A; \mathscr{X})| = \\ &\left|\inf_{u\in\mathbb{S}^{p-1}}\inf_{\alpha\in [0,1]} \min F(s_A(u,\alpha)) - \inf_{u\in\mathbb{S}^{p-1}}\inf_{\alpha\in [0,1]} \min F_n(s_A(u,\alpha))\right|\leq \\
			&\sup_{u\in\mathbb{S}^{p-1}}\sup_{\alpha\in [0,1]} \left|\min F(s_A(u,\alpha)) - \min F_n(s_A(u,\alpha))\right|
		\end{aligned}
	\end{equation}
	
	For simplicity, we use the following notation
	\begin{equation}\nonumber
		\begin{aligned}
			&F^+(t,u,\alpha) = \mathbb P(s_{\mathcal X}(u,\alpha)\leq t)\\
			&F^-(t,u,\alpha) = \mathbb P(s_{\mathcal X}(u,\alpha)\geq t)\\
			&F_n^+(t,u,\alpha) = \mathbb P_{u,\alpha}^n((-\infty,t])\\
			&F_n^-(t,u,\alpha) = \mathbb P_{u,\alpha}^n([t,\infty)).
		\end{aligned}
	\end{equation}
	
	Applying again properties of the supremum and infimum functions, we obtain
	\begin{equation}\nonumber
		\begin{aligned}
			&|D_{FT}(A;\mathcal X) - D_{FT}^n(A; \mathscr{X})| = \\
			&\sup_{u\in\mathbb{S}^{p-1}}\sup_{\alpha\in [0,1]} \max\{\left|F^+(s_A(u,\alpha),u,\alpha) - F_n^+(s_A(u,\alpha),u,\alpha)\right|,\\
			&\left|F^-(s_A(u,\alpha),u,\alpha) - F_n^-(s_A(u,\alpha),u,\alpha)\right|\}.
		\end{aligned}
	\end{equation}
	Thus,
	\begin{equation}\nonumber
		\begin{aligned}
			&\sup_{A\in\mathcal F_c(\mathbb R^p)} |D_{FT}(A;\mathcal X) - D_{FT}^n(A; \mathscr{X})|\leq\\
			&\sup_{A\in\mathcal F_c(\mathbb R^p)} \sup_{u\in\mathbb{S}^{p-1}}\sup_{\alpha\in [0,1]} \max\{\left|F^+(s_A(u,\alpha),u,\alpha) - F_n^+(s_A(u,\alpha),u,\alpha)\right|,\\
			&\left|F^-(s_A(u,\alpha),u,\alpha) - F_n^-(s_A(u,\alpha),u,\alpha)\right|\}\leq\\
			&\sup_{u\in\mathbb{S}^{p-1}}\sup_{\alpha\in [0,1]}\sup_{t\in\mathbb R}\max\{|F^+(t,u,\alpha)-F_n^+(t,u,\alpha)|,|F^-(t,u,\alpha)-F_n^-(t,u,\alpha)|\}.
		\end{aligned}
	\end{equation}

	{\it Step 2.}
	In Steps 2--5, we will focus on showing
	\begin{equation}\label{prob}
		\sup_{u\in\mathbb{S}^{p-1}}\sup_{\alpha\in [0,1]}\sup_{t\in\mathbb R}|F^+(t,u,\alpha)-F_n^+(t,u,\alpha)|\longrightarrow 0
	\end{equation}
	in probability.
	
	Reasoning by contradiction, assume \eqref{prob} fails. Since the Ky Fan metric metrizes convergence in probability, there would exist some $\eps>0$, some sequence $\{n'\}_n$ of natural numbers, some $\{u_{n'}\}_n\subseteq\mathbb S^{p-1}$, and some $\{\alpha_{n'}\}_n\subseteq[0,1]$ such that, for all $n\in\N$,
	\begin{equation}\label{contrad}
		P\left( \sup_t |F^+(t,u_{n'},\alpha_{n'})-F_{n'}^+(t,u_{n'},\alpha_{n'})|>\eps\right) >\eps.
	\end{equation}
	We start now preparations for Step 3. Since $\|\mathcal X_0\|$ is a random variable, there exists $R>0$ for which $P(\|\mathcal X_0\|>R)<9^{-1}\eps$.
	
	Moreover, since $\mathbb S^{p-1}$ is compact, there exists a subsequence $\{n''\}_n$ and some $u\in \mathbb S^{p-1}$ such that $u_{n''}\to u$.
	
	Finally, since $\{\alpha_{n''}\}_n$ is contained in $[0,1]$ it must have a monotone subsequence $\{\alpha_{n'''}\}_n$ converging to some $\alpha\in[0,1]$. Notice that, for any arbitrary $A\in\pfc(\R^p)$, the sequence $\{A_{\alpha_{n'''}}\}_n$ satisfies
	$$\bigcap_n A_{\alpha_{n'''}}=A_\alpha\mbox{ if }\{\alpha_{n'''}\}_n\mbox{ is increasing, and}$$ $$\bigcup_n A_{\alpha_{n'''}}=\{x\in\R^p\mid A(x)>\alpha\}\mbox{ if }\{\alpha_{n'''}\}_n\mbox{ is decreasing.}$$ Besides, a monotonic sequence of compact sets converging (in the set-theoretical sense) to a relatively compact set in $\R^p$, does converge in the Hausdorff metric to the closure of the set-theoretical limit (see, e.g., \cite[Proposition D.4 and Corollary D.7]{Mol}). Notice the strict level sets $\{x\in\R^p\mid A(x)>\alpha\}$ are indeed relatively compact since their superset $A_0$ is compact.
	
	Since the argument is identical in the increasing and decreasing cases, for ease of notation and without loss of generality we assume $\{\alpha_{n'''}\}_n$ is increasing from now on. From the former paragraph we have, for each $\omega\in\Omega$, the convergence $$d_H(\mathcal X_{\alpha_{n'''}}(\omega),\mathcal X_{\alpha}(\omega))\to 0,\mbox{ whence }\mathcal X_{\alpha_{n'''}}\to \mathcal X_{\alpha}\mbox{ almost surely}$$ in the complete separable metric space $(\pkc(\R^p),d_H)$. The Egorov theorem for metric spaces \cite[Proposition 3.3.11, pp. 105--106]{Par} shows that the convergence is indeed uniform out of a set having positive but arbitrarily small probability.
	
	{\it Step 3.}
	We will now take a suitable auxiliary $\del>0$ which will be instrumental in finding a sufficiently large $n$ for which \eqref{contrad} is violated. That value needs to satisfy several requirements:
	\begin{itemize}
		\item[\namedlabel{(1)}{(a)}] 
		There exists some subset of $\Omega$ with probability at least $1-9^{-1}\eps$, within which\linebreak
		$d_H(\mathcal X_{\alpha_{n'''}},\mathcal X_\alpha)<\del$ for all $n$ larger than some $n_1\in\N$ (independent of $\omega$).
		\item[\namedlabel{(2)}{(b)}] 
		$\|u_{n'''}-u\|\le R^{-1}\del$ for all $n$ larger than some $n_2\in\N$.
		\item[\namedlabel{(3)}{(c)}]  
		$\sup_{t\in\R}\left(F^+(t+2\del,\alpha,u)-F^+(t,\alpha,u)\right)\le 9^{-1}\eps$.
	\end{itemize}
	It was established in Step 2 that all sufficiently small $\del>0$ satisfy the first two requirements. As to the third one, we reason as follows. Rewrite
	$$F^+(t+2\del,\alpha,u)-F^+(t,\alpha,u)=F_{s_{\mathcal X}(u,\alpha)-2\del}(t)-F_{s_{\mathcal X}(u,\alpha)}(t).$$
	Letting $F$ denote the cumulative distribution function of $s_{\mathcal X}(u,\alpha)$ and $F_k$ that of $s_{\mathcal X}(u,\alpha)-2k^{-1}$, to show that Requirement \ref{(3)} holds for some $\del=k^{-1}$ and all smaller $\del$, it will suffice to show $F_k\to F$ uniformly.
	Since $F$ is continuous by the assumption that $X\in C^0[\pfc(\R^p)]$, c.d.f. uniform convergence is implied by pointwise convergence (e.g., \cite[Proposition A.11]{GhoVan}), which in turn follows from
	$$F_k(t)-F(t)=F(t+2k^{-1})-F(t)\to 0.$$
	
	Therefore, we are able to find some $\del>0$ which is sufficiently small to fulfil all three requirements (with the associated $n_1,n_2$).

	{\it Step 4.}
	With the triangle inequality and the properties of suprema,
	\begin{equation}\label{marimar}
		\sup_t |F^+(t,u_{n'''},\alpha_{n'''})-F_{n'''}^+(t,u_{n'''},\alpha_{n'''})|
		\le \sup_t A_{n,t} +\sup_t B_{n,t} +\sup_t C_{n,t}
	\end{equation}
	where
	$$A_{n,t} :=|F^+(t,u_{n'''},\alpha_{n'''})-F^+(t,u,\alpha)|,$$
	$$B_{n,t}:=|F^+(t,u,\alpha)-F_{n'''}^+(t,u,\alpha)|,$$
	$$C_{n,t}:=|F_{n'''}^+(t,u,\alpha)-F_{n'''}^+(t,u_{n'''},\alpha_{n'''})|.$$
	To control $A_{n,t}$, notice that by elementary calculations
	\begin{align}\nonumber\label{At}
		A_{n,t}\le \max\{&P(s_{\mathcal X}(u,\alpha)\le t+|s_{\mathcal X}(u_{n'''},\alpha_{n'''})-s_{\mathcal X}(u,\alpha)|)-P(s_{\mathcal X}(u,\alpha)\le t),\\
		&P(s_{\mathcal X}(u,\alpha)\le t)-P(s_{\mathcal X}(u,\alpha)\le t-|s_{{\mathcal X}(u_{n'''},\alpha_{n'''}})-s_{\mathcal X}(u,\alpha)|\}.
	\end{align}
	Taking into account the definition of $\del$,
	$$|s_{\mathcal X}(u_{n'''},\alpha_{n'''})-s_{\mathcal X}(u,\alpha)|\le |s_{\mathcal X}(u_{n'''},\alpha_{n'''})-s_{\mathcal X}(u_{n'''},\alpha)|+|s_{\mathcal X}(u_{n'''},\alpha)-s_{\mathcal X}(u,\alpha)|$$
	where the first summand is majorized by
	$$\sup_{v\in\mathbb S^{p-1}} |s_{\mathcal X}(v,\alpha_{n'''})-s_{\mathcal X}(v,\alpha)|=d_H(\mathcal X_{\alpha_{n'''}},\mathcal X_\alpha)\le\del$$
	with probability $1-9^{-1}\eps$ (valid for all $n\ge n_1$);
	and the second summand is majorized by
	$$\|u_{n'''}-u\|\cdot \|\mathcal X_\alpha\|\le \|u_{n'''}-u\|\cdot \|\mathcal X_0\|\le R^{-1}\del R=\del$$
	with probability $1-9^{-1}\eps$ (valid for all $n\ge n_2$).
	
	Therefore
	$$|s_{\mathcal X}(u_{n'''},\alpha_{n'''})-s_{\mathcal X}(u,\alpha)|\le 2\del$$
	with probability $1-2\cdot 9^{-1}\eps$ (valid for all $n\ge\max\{n_1,n_2\}$).
	There follows now from \eqref{At}
	$$A_{n,t}\le \max\{F^+(t+2\del,u,\alpha)-F^+(t,u,\alpha)+2\cdot 9^{-1}\eps, F^+(t,u,\alpha)-F^+(t-2\del,u,\alpha)+2\cdot 9^{-1}\eps\}.$$
	Since the first argument of $F^+$ in the expressions $F^+(t+2\del,u,\alpha)-F^+(t,u,\alpha)$ and $F^+(t,u,\alpha)-F^+(t-2\del,u,\alpha)$ is just displaced by $2\del$ units, both have the same supremum over $t\in\R$. And accordingly,
	$$\sup_t A_{n,t}\le \sup_t \left(F^+(t+2\del,u,\alpha)-F^+(t,u,\alpha)\right)+2\cdot 9^{-1}\eps\le 3\cdot 9^{-1}\eps$$
	by Requirement \ref{(3)} on $\delta$. Let us recall explicitly the probability that this inequality holds:
	\begin{equation}\label{111}
		P(\sup_t A_{n,t}\le  3\cdot 9^{-1}\eps)\ge 1-2\cdot 9^{-1}\eps.
	\end{equation}
	
	{\it Step 5.}
	To control $C_{n,t}$ we repeat the reasoning in Step 4 for the empirical distribution function $F_{n'''}$. This requires replacing the distribution $P_{s_{\mathcal X}(u,\alpha)}$ by its empirical distribution $P_{u,\alpha}^n$ but the reasoning is the same until the very end where Requirement \ref{(3)} is invoked. At that point, we only obtain
	$$\sup_t C_{n,t}\le  \sup_t \left(F_{n'''}^+(t+2\del,u,\alpha)-F_{n'''}^+(t,u,\alpha)\right)+2\cdot 9^{-1}\eps.$$
	But since, by the triangle inequality,
	$$F_{n'''}^+(t+2\del,u,\alpha)-F_{n'''}^+(t,u,\alpha)\le B_{n,t+2\del}+(F^+(t+2\del,u,\alpha)-F^+(t,u,\alpha))+B_{n,t},$$
	by taking suprema
	$$\sup_t C_{n,t}\le 3\cdot 9^{-1}\eps+2\sup_t B_{n,t}$$
	still holds, in {\em the same} subset of $\Omega$ having probability at least $1-2\cdot 9^{-1}\eps$ and for the same $n\ge\max\{n_1,n_2\}$.
	
	Accordingly, with \eqref{marimar} and \eqref{111},
	$$P\left(\sup_t |F^+(t,u_{n'''},\alpha_{n'''})-F_{n'''}^+(t,u_{n'''},\alpha_{n'''})|\le 6\cdot 9^{-1}\eps+3\sup_t B_{n,t}\right)\ge 1-2\cdot 9^{-1}\eps.$$
	Almost sure convergence $\sup_t B_{n,t}\to 0$ holds by the Glivenko--Cantelli theorem. Applying Egorov's theorem, with probability at least $1-9^{-1}\eps$ there holds $\sup_t B_{n,t}\le 9^{-1}\eps$ for all sufficiently large $n$, say $n\ge n_3$. We deduce
	$$P\left(\sup_t |F^+(t,u_{n'''},\alpha_{n'''})-F_{n'''}^+(t,u_{n'''},\alpha_{n'''})|\le 9\cdot 9^{-1}\eps\right)\ge 1-3\cdot 9^{-1}\eps$$
	or
	$$P\left(\sup_t |F^+(t,u_{n'''},\alpha_{n'''})-F_{n'''}^+(t,u_{n'''},\alpha_{n'''})|>\eps\right)\le 3^{-1}\eps.$$
	Since this is valid for all $n\ge\max\{n_1,n_2,n_3\}$ a contradiction to \eqref{contrad} has been reached. That establishes \eqref{prob} with convergence in probability.

	{\it Step 6.}
	To handle $F^-$, the argument is identical to that in Steps 2--5. Notice $F^-(t,u,\alpha)$ is just the cumulative distribution function of $-s_{\mathcal X}(u,\alpha)$ evaluated at $-t$ rather than that of $s_{\mathcal X}(u,\alpha)$ at $t$ , whence repeating the argument yields
	$$\sup_{u\in\mathbb{S}^{p-1}}\sup_{\alpha\in [0,1]}\sup_{t\in\mathbb R}|F^-(t,u,\alpha)-F_n^-(t,u,\alpha)|\to 0$$
	in probability. There follows
	\begin{equation}\label{prob2}
		\sup_{u\in\mathbb{S}^{p-1}}\sup_{\alpha\in [0,1]}\sup_{t\in\mathbb R}\max\{|F^+(t,u,\alpha)-F_n^+(t,u,\alpha)|,|F^-(t,u,\alpha)-F_n^-(t,u,\alpha)|\}\to 0
	\end{equation}
	in probability.

	{\it Step 7.}
	In order to prove that almost sure convergence follows, we rewrite \eqref{prob2} to profit from empirical process theory. Setting
	\begin{equation}\nonumber
		\mathcal F = \{\phi_{t,u,\alpha}^+,\phi_{t,u,\alpha}^- : (t,u,\alpha)\in\mathbb R\times\mathbb S^{p-1}\times [0,1]\},
	\end{equation}
	where $\phi_{t,u,\alpha}^+,\phi_{t,u,\alpha}^-:\Omega\rightarrow\mathbb R$ are given by
	\begin{equation}\nonumber
		\begin{aligned}
			&\phi_{t,u,\alpha}^+ (\omega) = \text{I}_{(-\infty,t]}(s_{\mathcal X(\omega)}(u,\alpha))\\ \nonumber
			&\phi_{t,u,\alpha}^- (\omega) = \text{I}_{[t,\infty)}(s_{\mathcal X(\omega)}(u,\alpha)),
		\end{aligned}
	\end{equation}
	we have 	
	\begin{equation}\nonumber
		\begin{aligned}
			&\sup_{u\in\mathbb{S}^{p-1}}\sup_{\alpha\in [0,1]}\sup_{t\in\mathbb R}\{|F^+(t,u,\alpha)-F_n^+(t,u,\alpha)|,|F^-(t,u,\alpha)-F_n^-(t,u,\alpha)|\} \\
			&=\sup_{\phi\in\mathcal F} |\text{E}_{\mathbb P_n}(\phi)-\text{E}_{\mathbb P}(\phi)|
		\end{aligned}
	\end{equation}
	By \cite[Corollary 3.7.9]{gineempiricalprocess}, the supremum converges to $0$ almost surely because it converges in probability and the family $\mathcal F$ has a $\mathbb P$-integrable measurable envelope (since all the functions in $\mathcal F$ take on values in $[0,1]$).
	
	By Step 1, that supremum majorizes $\sup_A |D_{FT}(A;\mathcal X) - D_{FT}^n(A; \mathscr{X})|$ which therefore converges almost surely to 0.
\end{proof}

\begin{proof}[Proof of Theorem \ref{medianateorema}]
	Let $\mathcal X$ be a fuzzy random variable and let $M\in\mathcal F_c(\mathbb R^p)$ the unique maximizer of $D_{FT}(\cdot;\mathcal X)$. Let $\mathcal X_1,\ldots ,\mathcal X_n$ be a simple random sample of $\mathcal X$ and let us denote by $M_n$ a fuzzy random variable that maximizes $D_{FT}^n(\cdot;\mathscr{X})$ for every $n\in\mathbb N$.
	
	{By Corollary \ref{corolarioupper}, the fuzzy Tukey depth $D_{FT}(\cdot;\mathcal X)$ is upper semicontinuous with respect to $d_\infty$, that is, for every $A\in\mathcal F_c(\mathbb R^p)$ and every sequence $\{A_n\}_n$ of fuzzy sets with $\lim_n d_\infty(A,A_n) = 0$, we have that}
	\begin{equation}\nonumber
		\lim\sup_{n\rightarrow\infty}D_{FT}(A;\mathcal X)\leq D_{FT}(A;\mathcal X)
	\end{equation}
	
	By \citet[Theorem 6.6]{primerarticulo}, we have that if $\{A_n\}_n$ is a sequence of fuzzy sets such that $\lim_n d_\infty(A_n,\text{I}_{\{0\}}) = \infty$, then $\lim_n D_{FT}(A_n;\mathcal X) = 0$. As $M$ is the unique maximizer of $D_{FT}(\cdot;\mathcal X)$, for every $\varepsilon > 0$, we have that
	\begin{equation}\nonumber
		D_{FT}(M;\mathcal X) - \sup_{\substack{A \in \mathcal{F}_c(\mathbb{R}^p): \\ d_\infty(A,M) > \varepsilon}}D_{FT}(A;\mathcal X) > 0.
	\end{equation}
	
	We define, for a given $\varepsilon > 0$,
	\begin{equation}\label{delta}
		\delta_\varepsilon := D_{FT}(M;\mathcal X) - \sup_{\substack{A \in \mathcal{F}_c(\mathbb{R}^p): \\ d_\infty(A,M) > \varepsilon}}D_{FT}(A;\mathcal X).
	\end{equation}
	
	Proving that $d_\infty(M,M_n)\longrightarrow 0$ almost surely is sufficient to prove that, given $\varepsilon > 0$, we have that
	\begin{equation}\nonumber
		\mathbb P\left(\left\{\sup_{n > m} d_\infty(M,M_n) > \varepsilon\right\}\right)\longrightarrow 0, \text{ when } m\rightarrow\infty
	\end{equation}
	Thus, let $\varepsilon > 0$ and let $m\in\mathbb N$, we have that
	\begin{equation}\nonumber
		\mathbb P\left(\left\{\sup_{n \geq m}d_\infty(M,M_n) > \varepsilon\right\}\right)\leq\mathbb P\left(\sup_{n\geq m}\left|D_{FT}(M;\mathcal X) - D_{FT}(M_n;\mathcal X)\right|\geq\delta_{\varepsilon}\right),
	\end{equation}
	where the inequality is due to the definition of $\delta_\varepsilon$ and the fact that $M$ is the unique maximizer of $D_{FT}(\cdot;\mathcal X)$ and in \eqref{delta} we take the supremum over all the fuzzy sets with $d_\infty$ distance to $M$ greater than $\varepsilon$.
	
	As $M_n$ is a maximizer of $D_{FT}^n(\cdot;\mathscr{X})$, we have that 
		\begin{equation}\nonumber
		D_{FT}^n(M_n;\mathscr{X}) - D_{FT}^n(M;\mathscr{X}) \geq 0.
			\end{equation}
			 Thus, denoting $R_n:=D_{FT}(M;\mathcal X) + D_{FT}^n(M_n;\mathscr{X}),$
	\begin{equation}\nonumber
		\begin{aligned}
			&\mathbb P\left(\sup_{n\geq m}\left|D_{FT}(M;\mathcal X) - D_{FT}(M_n;\mathcal X)\right|\geq\delta_{\varepsilon}\right)\\
			&\leq\mathbb P\left(\sup_{n\geq m} \left|R_n - D_{FT}^n(M;\mathscr{X}) - D_{FT}(M_n;\mathcal X)\right|\geq\delta_\varepsilon\right)\\
			&\leq\mathbb P\left(\sup_{n\geq m}\left|
			R_n
			\right| + \sup_{n\geq m}\left|D_{FT}(M_n;\mathcal X) - D_{FT}^n(M_n; \mathscr{X})\right|\geq\delta_\varepsilon\right),
		\end{aligned}
	\end{equation}
	where the second inequality is by properties of the supremum. Also by properties of the supremum, for $$T_n:=\sup_{n\geq m}\left|D_{FT}(M;\mathcal X) - D_{FT}^n(M;\mathscr{X})\right| 
			+ \sup_{n\geq m}\left|D_{FT}(M_n;\mathcal X) - D_{FT}^n(M_n;\mathscr{X})\right|,$$
	we have the following inequality
	\begin{equation}\nonumber\displaystyle
		\begin{aligned}
			&\mathbb P\left(T_n\geq\delta_\varepsilon\right)\\
			&\leq\mathbb P\left(\sup_{n\geq m}\left|D_{FT}(M;\mathcal X) - D_{FT}^n(M;\mathscr{X})\right|\geq\delta_\varepsilon/2\right)\\
			&+\mathbb P\left(\sup_{n\geq m}\left|D_{FT}(M_n;\mathcal X) - D_{FT}^n(M_n;\mathscr{X})\right|\geq\delta_\varepsilon/2\right).
		\end{aligned}
	\end{equation}
	
	If we take the supremum over all the fuzzy sets, instead of computing the fuzzy Tukey depth of $M$ and $M_n$, we have the following inequality
	\begin{equation}\label{ecuacionfinal}
		\begin{aligned}
			&\mathbb P\left(\sup_{n\geq m}\left|D_{FT}(M;\mathcal X) - D_{FT}^n(M;\mathscr{X})\right|\geq\delta_\varepsilon/2\right)\\
			&+\mathbb P\left(\sup_{n\geq m}\left|D_{FT}(M_n;\mathcal X) - D_{FT}^n(M_n;\mathscr{X})\right|\geq\delta_\varepsilon/2\right)\\
			&\leq\mathbb P\left(\sup_{n\geq m}\sup_{A\in\mathcal F_c(\mathbb R^p)}\left|D_{FT}(A;\mathcal X)-D_{FT}^n(A;\mathscr{X})\right|\geq\delta_\varepsilon/2\right)\\
			&+\mathbb P\left(\sup_{n\geq m}\sup_{A\in\mathcal F_c(\mathbb R^p)}\left|D_{FT}(A;\mathcal X) - D_{FT}^n(A;\mathscr{X})\right|\geq\delta_\varepsilon/2\right)\\
			&=2\mathbb P\left(\sup_{n\geq m}\sup_{A\in\mathcal F_c(\mathbb R^p)}\left|D_{FT}(A;\mathcal X) - D_{FT}^n(A;\mathscr{X})\right|\geq\delta_\varepsilon/2\right).
		\end{aligned}
	\end{equation}
	
	By Theorem \ref{fuzzyTukeyconsistency}, we have that
	\begin{equation}\nonumber
		\sup_{A\in\mathcal F_c(\mathbb R^p)}\left|D_{FT}(A;\mathcal X) - D_{FT}^n(A;\mathscr{X})\right|\longrightarrow 0, \text{ almost surely } [\mathbb P].
	\end{equation}
	It implies that, for every $\nu > 0$, we have that
	\begin{equation}
		\mathbb P\left(\sup_{n\geq m}\sup_{A\in\mathcal F_c(\mathbb R^p)}\left|D_{FT}(A;\mathcal X) - D_{FT}^n(A;\mathscr{X})\right|\geq\nu\right)\longrightarrow 0, \text{ when } m\rightarrow\infty.
	\end{equation}
	Thus, we have that the last expression in Equation \eqref{ecuacionfinal}
	\begin{equation}\nonumber
		2\mathbb P\left(\sup_{n\geq m}\sup_{A\in\mathcal F_c(\mathbb R^p)}\left|D_{FT}(A;\mathcal X) - D_{FT}^n(A;\mathscr{X})\right|\geq\delta_\varepsilon/2\right)\longrightarrow 0, \text{ when } m\rightarrow\infty.
	\end{equation}
	Then, taking into account all the equations above and bythe Sandwich's Rule, we have that
	\begin{equation}
		\mathbb P\left(\left\{\sup_{n\geq m} d_\infty(M_n,M) > \varepsilon\right\}\right)\longrightarrow 0, \text{ when } m\rightarrow\infty.
	\end{equation}
	Therefore, we have that $d_\infty(M,M_n)\longrightarrow 0$ almost surely, when $n\rightarrow\infty$.
\end{proof}

\section{On the computability of the fuzzy Tukey depth}\label{compt}
	\subsection{Approximated fuzzy Tukey depth}
The fuzzy Tukey depth formulation, in Equations \eqref{TukeyF} and \eqref{ecuacion1Tukey}, includes an infimum over  a no denumerable number of directions $u\in\mathbb S^{p-1}$ for $p>1$ and over a no denumerable number of real values $\alpha\in [0,1].$ This results in that the exact computation of the depth is not possible in general.
As commented in the introduction, the infimum over $\mathbb S^{p-1}$ is inherited from the multivariate Tukey depth  for $p>1$ and, as in that case, the fuzzy Tukey depth can be approximated taking a number of directions from $\mathbb S^{p-1}$ at random; resulting in what we could refer as the {\em random fuzzy Tukey depth}. This is  a fuzzy approximation with very good properties whose proofs follow from those in  \cite{randomTukey, Gabor}.
Thus, our proposal is to follow the same idea the infimum over $\mathbb S^{p-1}.$

Such randomization is also applied in functional spaces \citep{randomTukey}, with the infimum there not necessarily being over $\mathbb S^{p-1}.$ Analogously, a possibility is to also  take values at random in the interval $[0,1],$ to solve the other computational issue. However, we propose here to explore another venue.
%
%
We consider a deterministic finite grid in $[0,1]$, 
\begin{equation}\label{SmG}
S_m = \{\alpha_1,\ldots ,\alpha_n\},
\end{equation} 
and approximate the infimum over $\alpha\in [0,1]$ by the minimum over $\alpha\in S_m.$ 

The univariate fuzzy case ($p=1$) is the most common one in applications. Even in that case, where the unit sphere is finite, the continuum of the interval [0,1] remains. Thus, without loss of generality, we analyze our proposal for $p=1$ in what follows. Next, we define the univariate {\em grid fuzzy Tukey depth} and its sample version making use of Equation \eqref{ecuacion1Tukey}. Analogously, the definitions can be given making use of Equation \eqref{TukeyF}.
\begin{definition}
The grid fuzzy Tukey depth, $D_{FT}^{S_m},$ based on $S_m,$ $\mathcal{H}\subset L^{0}[\mathcal{F}_{c}(\mathbb{R})]$ and $\mathcal{J}\subset\mathcal{F}_{c}(\mathbb{R})$ of a fuzzy set $A\in\mathcal{J}$ with respect to $\mathcal{X}\in\mathcal{H}$ is \begin{equation*}
	\begin{aligned}
	D_{FT}^{S_m}(A;\mathcal X) :=	&\min_{\alpha\in S_m}\min\{HD(s_A(1,\alpha);s_{\mathcal X}(1,\alpha)),HD(s_A(-1,\alpha);s_{\mathcal X}(-1,\alpha))\}.
	\end{aligned}
\end{equation*} 
\end{definition}

\begin{definition}
	The sample grid fuzzy Tukey depth, $D_{FT}^{S_m,n}$, based on $S_m,$ $\mathcal{H}\subset L^{0}[\mathcal{F}_{c}(\mathbb{R})]$  and $\mathcal{J}\subset\mathcal{F}_{c}(\mathbb{R})$ of a fuzzy set $A\in\mathcal{J}$ with respect to  a simple random sample $\mathscr{X}:=\{\mathcal{X}_{1},\ldots ,\mathcal{X}_{n}\}$ from $\mathcal{X}\in\mathcal{H}$
	 is 
	\begin{equation*}
		D_{FT}^{S_m,n}(A;\mathscr{X}) := \min_{\alpha\in S_m}\min\{HD_n(s_A(1,\alpha);s_{n}(1,\alpha)),HD_n(s_A(-1,\alpha);s_{n}(-1,\alpha)) \},
	\end{equation*}
	with 
	\begin{equation}\label{sn}
	s_{n}(u,\alpha):=\{s_{\mathcal{X}_{1}}(u,\alpha),\cdots ,s_{\mathcal{X}_{n}}(u,\alpha)\}.
	\end{equation}
\end{definition}

The question is under what conditions we can assure that
\begin{equation}\label{T1}
	\lim_{n\rightarrow\infty} D_{FT}^{S_m}(A;\mathcal X) = D_{FT}(A;\mathcal X),
\end{equation} and, similarly, a consistent result for $D_{FT}^{S_m,n}$ when $m$ and $n$ go to infinity.
As 
in practice, it is common to consider \textit{triangular fuzzy numbers} \citep[Section 4.1]{librotriangular} in $F_c(\mathbb R)$, we analyze such case in the following theorems. 
Given any real numbers $a\leq b\leq c$, the triangular fuzzy number $T(a,b,c)$ is the fuzzy set given by
\begin{equation}
	\label{difusotriangular}
	\renewcommand{\arraystretch}{2.3} 
	T(a,b,c)(x) := \left\{
	\begin{array}{ll}
		\cfrac{x-a}{b-a}      & \hspace{1cm} \mathrm{if\ } x\in [a,b] \mathrm{\ and\ } a<b\\
		\cfrac{x-c}{b-c} & \hspace{1cm} \mathrm{if\ } x\in [b,c] \mathrm{\ and\ } b<c\\
		1    & \hspace{1cm} \mathrm{if\ } x\in [a,b] \mathrm{\ and\ } a=b \mathrm{\ or\ } x\in [b,c] \mathrm{\ and\ } b = c\\
		0 & \hspace{1cm} \mathrm{otherwise.}
	\end{array}
	\right.
\end{equation}
A triangular fuzzy  random number is a random variable whose realizations are triangular fuzzy  numbers.

Next result provides conditions under which \eqref{T1} is satisfied.
\begin{theorem}\label{teoremaaproximacion}
For any 	 triangular fuzzy  random number $\mathcal X,$  any triangular fuzzy number $A,$ 
and 
\begin{equation}\label{Sm}
S_m := \{i/2^m : i\in\{0,1,\ldots ,2^m\}\},
	\end{equation}
we have that
	$\lim_{m\rightarrow\infty} D_{FT}^{S_m}(A;\mathcal X) = D_{FT}(A;\mathcal X).$
\end{theorem}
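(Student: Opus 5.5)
The plan is to reduce the statement to Step 1 of the proof of Theorem \ref{TukeyUpperSemi}, which shows that $D_{FT}(A;\mathcal X)=D_N(A;\mathcal X)$ for every dense $N\subseteq[0,1]$. Here $D_N(A;\mathcal X)=\inf_{\alpha\in N}D_{CT}(A_\alpha;\mathcal X_\alpha)$, and Step 1 is stated for arbitrary $\mathcal X\in L^0[\pfc(\R^p)]$ and $A\in\pfc(\R^p)$.

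\textbf{Step (i): rewrite the grid depth.} For $p=1$ we have $\mathbb S^{0}=\{-1,1\}$. The quantity $\min\{HD(s_A(1,\alpha);s_{\mathcal X}(1,\alpha)),HD(s_A(-1,\alpha);s_{\mathcal X}(-1,\alpha))\}$ is the minimum over $u\in\{-1,1\}$ of $\min\{\mathbb P(s_{\mathcal X}(u,\alpha)\le s_A(u,\alpha)),\mathbb P(s_{\mathcal X}(u,\alpha)\ge s_A(u,\alpha))\}$. Reordering the minima, this equals $D_{CT}(A_\alpha;\mathcal X_\alpha)$ as defined in \eqref{CT}. Hence
\[
D_{FT}^{S_m}(A;\mathcal X)=\min_{\alpha\in S_m}D_{CT}(A_\alpha;\mathcal X_\alpha)=D_{S_m}(A;\mathcal X).
\]
By the same identity \eqref{ecuacion1Tukey}, $D_{FT}(A;\mathcal X)=D_{[0,1]}(A;\mathcal X)$.

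\textbf{Step (ii): monotonicity.} The dyadic grids in \eqref{Sm} are nested, $S_m\subseteq S_{m+1}\subseteq[0,1]$. Therefore the sequence $D_{S_m}(A;\mathcal X)$ is nonincreasing in $m$ and bounded below by $D_{FT}(A;\mathcal X)$. So the limit exists, and
\[
\lim_m D_{FT}^{S_m}(A;\mathcal X)=\inf_m\min_{\alpha\in S_m}D_{CT}(A_\alpha;\mathcal X_\alpha)=D_{N}(A;\mathcal X),\qquad N:=\bigcup_m S_m .
\]

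\textbf{Step (iii): conclude.} The set $N$ is the set of dyadic rationals in $[0,1]$, which is dense. Step 1 of the proof of Theorem \ref{TukeyUpperSemi} then gives $D_N(A;\mathcal X)=D_{FT}(A;\mathcal X)$, and the theorem follows. In this route the triangular shape plays no essential role; it only guarantees that $A$ and $\mathcal X$ lie in $\pfc(\R)$ with compact convex levels, so Step 1 applies.

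\textbf{Main obstacle.} The step that needs care is making sure Step 1 is really valid as used. For each $\alpha\in(0,1]$ it needs an increasing sequence in $N$ converging to $\alpha$; dyadics provide this, since $\lfloor 2^k\alpha\rfloor/2^k\uparrow\alpha$. It then needs the Hausdorff convergence $A_{\alpha_k}\to A_\alpha$, which holds because $A_\alpha=\bigcap_k A_{\alpha_k}$, and the analogous convergence $\mathcal X_{\alpha_k}(\omega)\to\mathcal X_\alpha(\omega)$ pointwise in $\omega$. Finally, it needs the Fatou-type inequality $\mathbb P(s_{\mathcal X_\alpha}(u)\ge s_{A_\alpha}(u))\ge\limsup_k\mathbb P(s_{\mathcal X_{\alpha_k}}(u)\ge s_{A_{\alpha_k}}(u))$, which uses that the events are closed under limits of support-function values.

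The delicate point in Step 1 is $\alpha=0$, where a decreasing sequence is used and the union of strict levels may not recover $A_0$. I would sidestep this entirely: $0\in S_m$ for every $m$, so the $\alpha=0$ term is already included in each grid minimum.

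As a fallback, if one prefers not to rely on Step 1, the triangular structure gives a direct argument. The support functions $s_{\mathcal X(\omega)}(\pm1,\alpha)$ and $s_A(\pm1,\alpha)$ are affine in $\alpha$, so each event $\{s_{\mathcal X}(u,\alpha)\le s_A(u,\alpha)\}$ is determined by the sign of an affine function of $\alpha$. Along an increasing dyadic sequence $\alpha_k\uparrow\alpha$, these indicators have $\limsup$ bounded by the indicator at $\alpha$, because affine functions are continuous and non-strict inequalities pass to the limit. Fatou's lemma then yields the required semicontinuity in $\alpha$ directly.
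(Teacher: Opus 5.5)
Your proof is correct, and it shares its key step with the paper's proof. Both apply Step 1 of the proof of Theorem~\ref{TukeyUpperSemi} to the dense set $S=\bigcup_m S_m$ of dyadic rationals, which gives $D_{FT}(A;\mathcal X)=\inf_{\alpha\in S}D_{CT}(A_\alpha;\mathcal X_\alpha)$.

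The two proofs differ in how they pass from $\inf_{\alpha\in S}$ to $\lim_m\min_{\alpha\in S_m}$.
\begin{itemize}
\item The paper writes $\mathcal X=T(X-Y,X,X+Z)$ with $X,Y,Z$ continuous. It argues that $\alpha\mapsto\mathbb F_{X+(1-\alpha)Z}(c-\alpha(c-b))$ is continuous, using uniform convergence of continuous distribution functions. It then takes a minimiser $\alpha_0\in[0,1]$ and squeezes $\min_{\alpha\in S_m}$ between the value at $\alpha_0$ and the values at grid points $\alpha_m\to\alpha_0$.
\item You observe that the grids are nested. The finite minima therefore form a nonincreasing sequence, whose limit is $\inf_{\alpha\in\bigcup_m S_m}$ by definition.
\end{itemize}

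Your route is shorter. It uses neither the triangular shape nor any continuity of the distributions of $s_{\mathcal X}(\pm1,\alpha)$, so it proves the statement exactly as written (``any triangular fuzzy random number''). The paper's argument tacitly adds continuity hypotheses that are not in the statement. Moreover, continuity of $X$ and $Z$ alone would not guarantee a continuous distribution function for $X+(1-\alpha)Z$.

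What the paper's continuity argument could buy in principle is independence from Step 1. For a continuous $f$ on $[0,1]$, $\inf_{[0,1]}f=\inf_S f$ follows from density alone. The paper does not use it that way.

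One minor remark: your concern about $\alpha=0$ in Step 1 is unfounded. For $\alpha_n\downarrow0$ the compact sets $A_{\alpha_n}$ increase to $\{A>0\}$, whose closure is $A_0$ by definition, so $d_H(A_{\alpha_n},A_0)\to0$. Including $0$ in every grid makes the point moot in any case.
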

The proofs of the results in this section are in the Appendix. 
Next theorem states that the sample approximated fuzzy Tukey depth is strongly consistent with respect to the fuzzy Tukey depth. 
\begin{theorem}\label{teoremaaproximacion2}
	Given the underlying probability space $(\Omega,\mathcal{A},\mathbb{P})$ and $S_m$ as in \eqref{Sm}, let $\mathcal X$ be a triangular fuzzy random number, $\mathscr{X}:=\{\mathcal{X}_{1},\cdots ,\mathcal{X}_{n}\}$ be i.i.d. as $\mathcal{X}$ and $A$ be a triangular fuzzy number. Then, 
	\begin{equation}\nonumber
		|D_{FT}^{S_m,n}(A;\mathscr{X}) - D_{FT}(A;\mathcal X)|\rightarrow 0\text{, almost surely }[\mathbb P],  \text{	as }n,m\rightarrow\infty.
	\end{equation}
\end{theorem}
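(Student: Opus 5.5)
Triangle inequality: for fixed $m$ and $n$,
\[
|D_{FT}^{S_m,n}(A;\mathscr{X}) - D_{FT}(A;\mathcal X)| \le |D_{FT}^{S_m,n}(A;\mathscr{X}) - D_{FT}^{S_m}(A;\mathcal X)| + |D_{FT}^{S_m}(A;\mathcal X) - D_{FT}(A;\mathcal X)|.
\]
The second term is deterministic and tends to $0$ as $m\to\infty$ by Theorem \ref{teoremaaproximacion}. So it suffices to show that the first term tends to $0$ almost surely, \emph{uniformly in $m$}. The joint limit in $n,m$ then follows: given $\eps>0$, choose $m_0$ so the second term is below $\eps/2$ for $m\ge m_0$, and then an almost surely finite $n_0(\omega)$ so the first term is below $\eps/2$ for all $n\ge n_0$ and all $m$.

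For the first term, with $p=1$ we have $\mathbb S^0=\{-1,1\}$. Using $|\min_i a_i-\min_i b_i|\le\max_i|a_i-b_i|$ together with the halfspace-depth representations in the definitions of $D_{FT}^{S_m}$ and $D_{FT}^{S_m,n}$, we get
\[
|D_{FT}^{S_m,n}(A;\mathscr{X}) - D_{FT}^{S_m}(A;\mathcal X)|\le \sup_{u\in\{-1,1\}}\sup_{\alpha\in[0,1]}\sup_{t\in\R}\max\{|F^+-F_n^+|,|F^--F_n^-|\}(t,u,\alpha),
\]
using the notation $F^\pm,F_n^\pm$ from the proof of Theorem \ref{fuzzyTukeyconsistency}. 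The right-hand side does not depend on $m$, since $S_m\subseteq[0,1]$. It is exactly the quantity that Steps 2--7 of that proof show converges to $0$ almost surely.

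The main obstacle is that Steps 2--7 used continuity of the distribution of $s_{\mathcal X}(u,\alpha)$, i.e.\ $\mathcal X\in C^0[\pfc(\R)]$, which is not assumed here. I would avoid that hypothesis by exploiting the triangular structure. Write $\mathcal X=T(a,b,c)$ with random $a\le b\le c$. Then
\[
s_{\mathcal X}(1,\alpha)=c-\alpha(c-b), \qquad s_{\mathcal X}(-1,\alpha)=-a-\alpha(b-a),
\]
both affine in $\alpha$. Hence each event $\{s_{\mathcal X}(u,\alpha)\le t\}$ is of the form $\{\langle w,(a,b,c)\rangle\le t\}$, i.e.\ the random vector $(a,b,c)\in\R^3$ lies in a closed halfspace, and similarly for $\ge t$. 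So the class $\mathcal F$ of Step 7 is contained in the class of indicators of halfspaces in $\R^3$ composed with $(a,b,c)$. That class is Vapnik--Chervonenkis, hence Glivenko--Cantelli, and the supremum converges to $0$ almost surely for any distribution of $(a,b,c)$. This gives the uniform-in-$m$ almost sure bound without invoking Theorem \ref{fuzzyTukeyconsistency} or any continuity assumption. If needed, measurability of the supremum follows because it can be restricted to rational $(t,\alpha)$ by right-continuity, or because halfspaces form a pointwise-measurable class.
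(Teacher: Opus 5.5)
Your proposal is correct and follows essentially the same route as the paper: the triangle inequality, Theorem \ref{teoremaaproximacion} for the deterministic term, and a bound on the stochastic term by a supremum over $\alpha\in[0,1]$ that does not depend on $m$ and is controlled by the Glivenko--Cantelli property of closed halfspaces. The paper uses halfspaces in $\mathbb{R}^2$ at the fixed thresholds $s_A(\pm 1,\alpha)$, while you use halfspaces in $\mathbb{R}^3$ with arbitrary $t$; your explicit remarks that this bound is uniform in $m$ (which is what justifies the joint limit in $(n,m)$) and that no continuity of $\mathcal X$ is needed for this term are accurate and slightly sharpen the paper's presentation.
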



Next we have results equivalent to Theorem \ref{medianateorema} and Corollary \ref{medianacorolario} for the case of  $D_{FT}^{S_m,n}$ with respect to  triangular fuzzy random numbers. Note they are for a general deterministic grid, as in \eqref{SmG}.
\begin{theorem}\label{teoremamedianasaprox}
	Given the underlying probability space $(\Omega,\mathcal{A},\mathbb{P}),$ let $\mathcal X\in C^0[\mathcal F_c(\mathbb R)]$ be a triangular fuzzy random number such that $D_{FT}(\cdot;\mathcal X)$ is uniquely maximize in $M\in\mathcal F_c(\mathbb R).$ For each $m, n\in \mathbb N,$ let $\mathscr{X}:=\{\mathcal{X}_{1},\cdots ,\mathcal{X}_{n}\}$ be i.i.d. as $\mathcal{X}$ and $M_{m,n}$ denote a maximizer of  $D_{FT}^{S_m,n}(\cdot;\mathscr{X}).$ Then, as $n,m\rightarrow\infty,$
	\begin{equation}\nonumber
		d_\infty(M_{m,n},M) \rightarrow 0\text{, almost surely }[\mathbb P].
	\end{equation}
\end{theorem}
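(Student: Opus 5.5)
The plan is to copy the argmax argument in the proof of Theorem \ref{medianateorema}. That argument needs only two ingredients. The first is the well-separatedness quantity $\delta_\varepsilon>0$ of \eqref{delta}, which depends only on $D_{FT}(\cdot;\mathcal X)$ and on the uniqueness of $M$, and so carries over verbatim. The second is a uniform approximation, now of the form
$$\Delta_{m,n}:=\sup_{A\in\mathcal F_c(\mathbb R)}\bigl|D_{FT}^{S_m,n}(A;\mathscr X)-D_{FT}(A;\mathcal X)\bigr|\longrightarrow 0\ \text{ a.s. as } n,m\to\infty .$$
Given this, the chain
$$D_{FT}(M_{m,n};\mathcal X)\ \ge\ D_{FT}^{S_m,n}(M_{m,n};\mathscr X)-\Delta_{m,n}\ \ge\ D_{FT}^{S_m,n}(M;\mathscr X)-\Delta_{m,n}\ \ge\ D_{FT}(M;\mathcal X)-2\Delta_{m,n}$$
shows that, almost surely, eventually $D_{FT}(M_{m,n};\mathcal X)>D_{FT}(M;\mathcal X)-\delta_\varepsilon$. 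By the definition of $\delta_\varepsilon$ this forces $d_\infty(M_{m,n},M)\le\varepsilon$. I will assume, as is implicit for a ``general deterministic grid'', that $S_m$ contains $0$ and $1$ and that its mesh $h_m$ tends to $0$.

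I split $\Delta_{m,n}$ into a sampling term and a grid term via the triangle inequality.

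For the sampling term, $D_{FT}^{S_m,n}$ and $D_{FT}^{S_m}$ are minima over $\alpha\in S_m$ and $u\in\{\pm1\}$ of the empirical and population univariate depths, respectively. Exactly as in Step 1 of the proof of Theorem \ref{fuzzyTukeyconsistency}, $\sup_A|D_{FT}^{S_m,n}-D_{FT}^{S_m}|$ is bounded by
$$\sup_{u,\alpha,t}\max\{|F^+-F^+_n|,|F^--F^-_n|\}.$$
This bound does not depend on $m$, and it tends to $0$ almost surely by Steps 2--7 of that proof, using $\mathcal X\in C^0$. So this term is handled entirely by earlier work.

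The grid term, $\sup_A|D_{FT}^{S_m}(A;\mathcal X)-D_{FT}(A;\mathcal X)|\to0$, is the main obstacle. Theorem \ref{teoremaaproximacion} is only pointwise and only for triangular $A$, whereas the maximizers $M_{m,n}$ are arbitrary elements of $\mathcal F_c(\mathbb R)$. Since $S_m\subseteq[0,1]$, we have $D_{FT}^{S_m}\ge D_{FT}$, so only the upper bound matters. The key structural facts I plan to use are these:
\begin{itemize}
\item for every $A$, $\alpha\mapsto s_A(1,\alpha)$ is nonincreasing and $\alpha\mapsto s_A(-1,\alpha)$ is nonincreasing;
\item for triangular $\mathcal X=T(a,b,c)$, $s_{\mathcal X}(1,\alpha)=c-\alpha(c-b)$ is affine in $\alpha$ with random slope bounded by the spread $c-b$, and similarly for $u=-1$.
\end{itemize}

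Fix $\alpha\in[\alpha_i,\alpha_{i+1}]$ with consecutive grid points. For the ``$\le$'' probability with $u=1$, monotonicity gives $s_A(1,\alpha)\ge s_A(1,\alpha_{i+1})$, and the affine form gives $s_{\mathcal X}(1,\alpha)\le s_{\mathcal X}(1,\alpha_{i+1})+(c-b)h_m$. Hence
$$\mathbb P(s_{\mathcal X}(1,\alpha)\le s_A(1,\alpha))\ \ge\ \mathbb P\bigl(s_{\mathcal X}(1,\alpha_{i+1})+(c-b)h_m\le s_A(1,\alpha_{i+1})\bigr).$$
The ``$\ge$'' probability is handled symmetrically using $\alpha_i$.

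To make the right-hand side close to the grid value uniformly in $A$ and $i$, I proceed in three moves:
\begin{enumerate}
\item choose $R$ with $\mathbb P(c-b>R)<\eta$;
\item bound the loss by $\eta+\sup_{\beta\in[0,1],t}\bigl(F^+(t,1,\beta)-F^+(t-Rh_m,1,\beta)\bigr)$;
\item show this supremum over $\beta\in[0,1]$ tends to $0$, by the compactness/contradiction argument of Steps 2--3 of the proof of Theorem \ref{fuzzyTukeyconsistency}. There, continuity of each c.d.f. and $d_H$-continuity of $\beta\mapsto\mathcal X_\beta$ (automatic here, since the levels of triangular numbers vary continuously) give a uniform modulus of continuity of $t\mapsto F^\pm(t,u,\beta)$.
\end{enumerate}
If step 3 resists, I would first prove the weaker, pointwise-in-$\beta$ version and then upgrade it using the compactness of $[0,1]$.

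Taking infima over $\alpha$ then yields $D_{FT}(A)\ge D_{FT}^{S_m}(A)-o(1)$ uniformly in $A$. Combined with the sampling term, this gives $\Delta_{m,n}\to0$ almost surely, and the argmax chain above completes the proof.
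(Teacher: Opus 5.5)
\textbf{The gap.} The step that fails is the one you take for free: that the separation constant $\delta_\varepsilon$ of \eqref{delta} is positive and ``carries over verbatim''. Uniqueness of the maximizer, upper semicontinuity and decay at infinity give a positive gap only through a compactness argument. But $d_\infty$-bounded closed subsets of $\mathcal F_c(\mathbb R)$ are not compact, so separation has to be proved.

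For triangular $\mathcal X$ the natural route is this. If $d_\infty(A,M)>\varepsilon$, some $(u,\alpha)$ has $|s_A(u,\alpha)-s_M(u,\alpha)|>\varepsilon$. Hence $D_{FT}(A;\mathcal X)\le\tfrac12-c_\varepsilon$, with $c_\varepsilon=\inf_{u,\alpha}\min\{\tfrac12-F_{u,\alpha}(s_M(u,\alpha)-\varepsilon),\,F_{u,\alpha}(s_M(u,\alpha)+\varepsilon)-\tfrac12\}$, where $F_{u,\alpha}$ is the c.d.f.\ of $s_{\mathcal X}(u,\alpha)$. This is exactly the paper's $c_\varepsilon$. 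Its positivity needs every $s_{\mathcal X}(u,\alpha)$ to have a unique median; continuity in $\alpha$ and compactness of $[0,1]$ then finish the argument. That uniqueness is not implied by the hypotheses (the paper asserts it from continuity, which is insufficient).

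Without it the statement itself fails. Take $\mathcal X=T(B-S',B,B+S)$ with $B$ uniform on $[0,1]\cup[2,3]$ and $S,S'$ standard exponential, all independent.
All $s_{\mathcal X}(u,\alpha)$ are continuous. For $\alpha<1$ they have positive densities on half-lines, hence unique medians. Depth $\tfrac12$ means that all level endpoints are medians, and $\sup M_\alpha\downarrow 2$, $\inf M_\alpha\uparrow 1$ as $\alpha\uparrow1$. Left-continuity of the levels then gives a unique maximizer with core $M_1=[1,2]$.
Now take $n$ odd. A maximizer of $D_{FT}^{S_m,n}$ must attain the largest empirical depth $\frac{n+1}{2n}$ at every grid level; this value is attained by the fuzzy set built from the nested sample level medians. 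At the level $1\in S_m$, this forces its core to be the single point given by the sample median of $B_1,\dots,B_n$. Hence $d_\infty(M_{m,n},M)\ge\tfrac12$ for every $m$ and every odd $n$.

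\textbf{The rest, and comparison with the paper.} Everything else in your proposal is sound. Once you add uniqueness of the level medians as a hypothesis and prove $\delta_\varepsilon\ge c_\varepsilon>0$ as above, you have a valid route that differs from the paper's.

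The paper identifies $M_{m,n}$ on $S_m$ with empirical level medians (via \cite{quintoarticulo}). It proves their uniform convergence by Glivenko--Cantelli for halfspaces of $\mathbb R^2$, and interpolates between grid points using monotonicity of the levels and uniform continuity of the population medians.

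You instead prove $\sup_A|D_{FT}^{S_m,n}-D_{FT}|\to0$ and run an argmax argument. Your grid estimate combines monotonicity of $s_A(u,\cdot)$, the random Lipschitz bound of triangular support functions, and a modulus of continuity of $t\mapsto F^\pm(t,u,\beta)$ uniform in $\beta$ (from P\'olya's theorem and compactness). It is correct, and it is uniform in $A$ rather than pointwise as in Theorem \ref{teoremaaproximacion}. It also spares you the explicit identification of the empirical maximizers, and works for any grid containing $0$ and $1$ with mesh tending to $0$.

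For the sampling term you do not even need Theorem \ref{fuzzyTukeyconsistency}. For triangular $\mathcal X$ the relevant indicator classes are halfspaces in the coordinates $(a,b,c)$, hence Glivenko--Cantelli, as in the proof of Theorem \ref{teoremaaproximacion2}.
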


\begin{corollary}\label{Cs}
	Given the underlying probability space $(\Omega,\mathcal{A},\mathbb{P}),$ let $\mathcal X\in C^0[\mathcal F_c(\mathbb R)]$ be a triangular fuzzy random number such that $D_{FT}(\cdot;\mathcal X)$ is uniquely maximize in $M\in\mathcal F_c(\mathbb R).$ For each $m, n\in \mathbb N,$ let $\mathscr{X}:=\{\mathcal{X}_{1},\cdots ,\mathcal{X}_{n}\}$ be i.i.d. as $\mathcal{X}$ and $M_{m,n}$ denote a maximizer of  $D_{FT}^{S_m,n}(\cdot;\mathscr{X}).$ Then, as $n,m\rightarrow\infty,$
\begin{equation}\nonumber
		\rho_r(M_{m,n},M), d_r(M_{m,n},M) \rightarrow 0\text{, almost surely }[\mathbb P], \mbox{ for every } r\in [1,\infty).
	\end{equation}	
\end{corollary}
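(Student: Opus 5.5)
The plan is to derive Corollary~\ref{Cs} from Theorem~\ref{teoremamedianasaprox} by comparing metrics, in the same way that Corollary~\ref{medianacorolario} follows from Theorem~\ref{medianateorema}. Theorem~\ref{teoremamedianasaprox} gives, on a set $\Omega_0$ of probability one, $d_\infty(M_{m,n},M)\to 0$ as $n,m\to\infty$. It therefore suffices to show that, for each fixed $r\in[1,\infty)$, both $d_r(A,B)$ and $\rho_r(A,B)$ are bounded above by $d_\infty(A,B)$ for all $A,B\in\mathcal F_c(\mathbb R)$. The convergence then transfers pathwise on $\Omega_0$, and the exceptional null set does not depend on $r$.

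For $d_r$, use that $\nu$ is a probability measure on $[0,1]$ and that $d_H(A_\alpha,B_\alpha)\le d_\infty(A,B)$ for every $\alpha$. Then
\[
d_r(A,B)=\Bigl(\int_0^1 d_H(A_\alpha,B_\alpha)^r\,d\nu(\alpha)\Bigr)^{1/r}\le d_\infty(A,B).
\]

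For $\rho_r$ with $p=1$, the sphere is $\{-1,1\}$ and $\mathcal V_1$ is uniform on it. Recall the identity used in the proof of Theorem~\ref{TukeyUpperSemi}: $d_H(K,L)=\sup_{u}|s_K(u)-s_L(u)|$. It gives $|s_A(u,\alpha)-s_B(u,\alpha)|\le d_H(A_\alpha,B_\alpha)\le d_\infty(A,B)$ for every $u$ and $\alpha$. Integrating against the probability measure $\nu\otimes\mathcal V_1$ yields $\rho_r(A,B)\le d_\infty(A,B)$. Alternatively, one could cite the equivalence of $d_r$ and $\rho_r$ from \citet{diamondkloden}, but the direct bound is cleaner.

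Applying these inequalities with $A=M_{m,n}(\omega)$ and $B=M$ for $\omega\in\Omega_0$ gives the claim.

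The only mild obstacle is measurability: one must make sure the almost-sure statement is understood pathwise, so that no measurability of $M_{m,n}$ beyond what Theorem~\ref{teoremamedianasaprox} already provides is needed. Since the comparisons are deterministic inequalities between metrics, this needs no further work.
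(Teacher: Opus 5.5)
Your proposal is correct and follows the route the paper intends: the paper gives no separate written proof of this corollary, but it argues exactly this way in the proof of Corollary~\ref{corolarioupper}, transferring the almost sure $d_\infty$-convergence of Theorem~\ref{teoremamedianasaprox} pathwise through the deterministic bound $d_r\le d_\infty$. The only small difference is that you bound $\rho_r\le d_\infty$ directly via $|s_A(u,\alpha)-s_B(u,\alpha)|\le d_H(A_\alpha,B_\alpha)$, rather than invoking the equivalence of $d_r$ and $\rho_r$ from \citet{diamondkloden}; this is cleaner and, for $p=1$, fully self-contained.
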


	\subsection{Simulations}\label{simulations}

To exemplify the previous result, we make use of triangular fuzzy random variables. Following \cite{medianfuzzy1}, we consider them  of the form  
\begin{equation}\label{X}
\mathcal X := T(X-Y,X,X+Z),
\end{equation}where $X$ 
is a continuous real random variable and $Y$ and $Z$ are non-negative random variables; assuring that the triangular fuzzy numbers are well defined.
Taking into account the definition of support function of a fuzzy random variable, in \eqref{srv}, for every $\alpha\in [0,1]$ we have
\begin{equation}\label{s}
	\begin{aligned}
		s_{\mathcal X}(1,\alpha) &= X + (1-\alpha)Z\\
		s_{\mathcal X}(-1,\alpha) &= (1-\alpha)Y - X.
	\end{aligned}
\end{equation}
Then, for any given  triangular fuzzy number $A := T(a,b,c),$ with $a,b,c\in\mathbb R$ such that $a\leq b\leq c$, we have that the fuzzy Tukey depth of $A$ with respect to $\mathcal X$ is
\begin{equation}\nonumber
	\begin{aligned}
		D_{FT}(A;\mathcal X) =& \inf_{\alpha\in [0,1]}\min\{HD(b + (1-\alpha)c;X+(1-\alpha)Z),HD((1-\alpha)a-b;(1-\alpha)Y -X) \}.
	\end{aligned}
\end{equation}
And, by taking into account that the multivariate halfspace depth is invariant under affine transformations \citep{ZuoSerfling}, we can express the fuzzy Tukey depth of $A$ with respect to $\mathcal X$ as
\begin{equation}\nonumber
	\begin{aligned}
		D_{FT}(A;\mathcal X) =& \inf_{\alpha\in [0,1]}\min\{HD(b + (1-\alpha)c;X+(1-\alpha)Z),HD(b- (1-\alpha)a;X-(1-\alpha)Y)\}.
	\end{aligned}
\end{equation}
Analogously
\begin{equation}\nonumber
	\begin{aligned}
	D_{FT}^{S_m}(A;\mathcal X) =	&\min_{\alpha\in S_m}\min\{HD(b+(1-\alpha)c;X+(1-\alpha)Z),HD(b-(1-\alpha)a;X-(1-\alpha)Y)\}.
	\end{aligned}
\end{equation}
and
\begin{equation}\label{smn}
	\begin{aligned}
	D_{FT}^{S_m,n}(A;\mathscr{X}
	)  :=	&\min_{\alpha\in S_m}\min \{HD_n(b+(1-\alpha)c;s_n(1,\alpha)),HD_n(b-(1-\alpha)a;s_n(-1,\alpha)) \},
	\end{aligned}
\end{equation} 
with $s_n(u,\alpha)$ as in \eqref{sn}.
We make use of this expression in the rest of the section to simplify the computations. In the simulations, we take $S_m$ as in \eqref{Sm}.

	\subsubsection{Known population $1$-median}
The aim of this subsection is to assess Corollary \ref{Cs} through a simulation study by observing that, making use of the sample grid fuzzy Tukey depth, the sequence of empirical deepest observed fuzzy set approaches the population $1$-median as the sample size increases.
To this end, let $\mathcal{X}$ be as in \eqref{X}  with $X$ and $Z$ two independent real-valued random variables following a uniform distribution in $(0,1)$ and $Y$ the degenerate distribution that takes always zero value. Note that $\mathcal X\in C^0[\mathcal{F}_c(\mathbb{R})]$  because the real-valued random variables $X$ and $Z$ are continuous.

Following \eqref{s}, the support function of $\mathcal X$ has, for every $\alpha\in [0,1]$, the expression
\begin{equation*}
	\begin{aligned}
		s_{\mathcal X}(1,\alpha) &
	= X + (1-\alpha)Z  \\
	 	s_{\mathcal X}(-1,\alpha) &
	= - X.
	\end{aligned}
\end{equation*}
As $X$ and $Z$ are independent uniform (0,1) distributions, 
$$\text{Med}(X) = 1/2\mbox{ and }\text{Med}(X+(1-\alpha)Z) = 1-\alpha/2$$ for every $\alpha\in [0,1]$. Thus, thanks to \citet[Theorem 4.2]{quintoarticulo} which establishes that the maximizers of the fuzzy Tukey depth are the $1$-medians,
we have that  the $1$-median of the fuzzy random variable $\mathcal X$ is the fuzzy set 
\begin{equation}\label{A}
A := T(1/2,1/2,1).
\end{equation}
$A$ is displayed in green in each of the plots of Figure \ref{figurauniforme}.
 Note that, as $\mathcal{X}$ belongs to $C^0[\mathcal{F}_c(\mathbb{R})]$, its population $1$-median is unique,  due to the uniqueness of the univariate median of continuous distributions and thanks to the existence result in \citet[Theorem 4.1]{medianfuzzy1}. 
		\begin{figure}[!htbp]
	\begin{center} 		\includegraphics[width=0.49\linewidth]{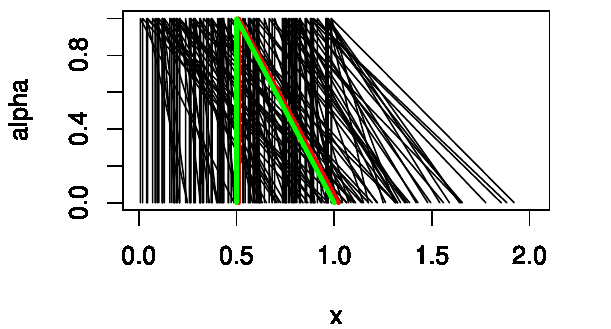} 	
		\includegraphics[width=0.49\linewidth]{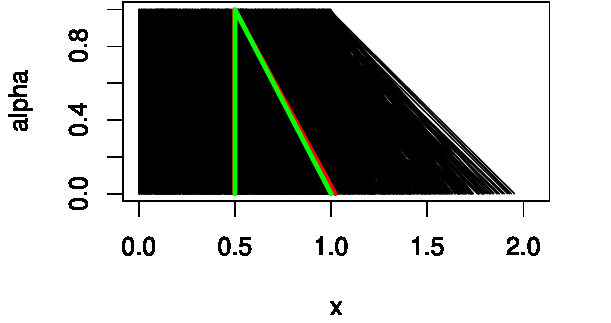} 	
		\includegraphics[width=0.49\linewidth]{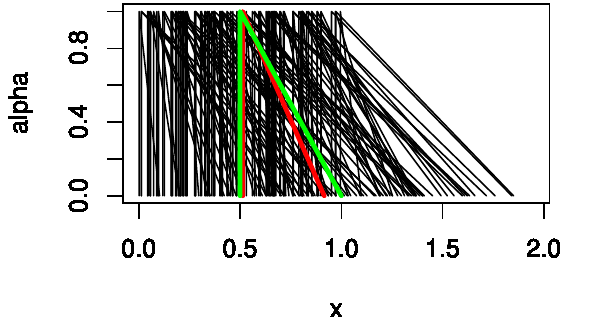} 	
		\includegraphics[width=0.49\linewidth]{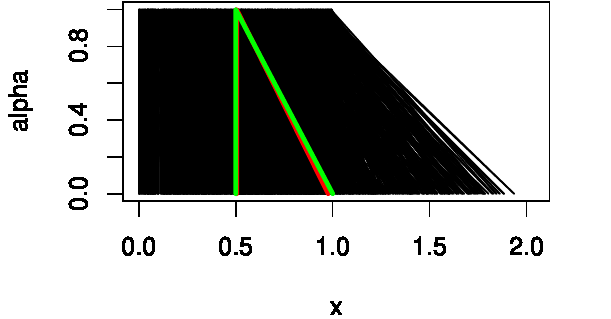} 	
	\end{center} 	\caption{ 	Illustration of the empirical approximated fuzzy Tukey depth. Top row corresponds to the approximation with respect to the set $S_{10}$, while the bottom row corresponds to the approximation with respect to $S_5$. The first column is a sample of $n = 100$ triangular fuzzy sets, while in the second column the sample size is $n = 1000$. Fuzzy sets colored in green corresponds to the $1$-median of $\mathcal X$, while fuzzy sets colored in red corresponds to the fuzzy set in the sample with maximal depth.}	\label{figurauniforme}
\end{figure}


For each $n\in\{100,1000\},$ we draw  a sample $
\mathcal{X}_{1},\ldots ,\mathcal{X}_{n}
$ of i.i.d. random variables, each having the same distribution as  $\mathcal{X}.$  
Figure \ref{figurauniforme} displays those samples. Each of the two samples in the right column plots of  Figure \ref{figurauniforme} are obtained with $n=100$ and each  of the samples in the  left column plots of the figure are  with $n=1000.$ 
Then, for each $n,$ we  select a fuzzy set in the sample that maximizes the empirical grid fuzzy Tukey depth with respect to  the sample, in \eqref{smn}. We do  this for $m \in\{5,10\}.$ The selected fuzzy set is recorded in the last column of Table \ref{T1}  and displayed in red in the plots of Figure \ref{figurauniforme}, in the top row making use of $m =10$ and in the bottom of  $m = 5.$ 
\begin{table*}[!htb]
	\centering
	\begin{tabular}{@{}llcccc@{}}
		\hline
		$n$ & $m$ 
		& \multicolumn{1}{c}{$\rho_2$ distance}
		& \multicolumn{1}{c}{Approximated $1$-median} \\
		
		\hline
		$100$ & $5$ & 0.0335  & $T(0.5137,0.5137,0.9155)$ \\
		$1000$ & $5$    & 0.0094  & $T(0.5047,0.5047,0.9765)$ \\
		$100$ & $10$    & 0.0138 & $T(0.5106,0.5106,1.0217)$ \\
		$1000$ & $10$    & 0.0085  & $T(0.4987,0.4987,1.0214)$ \\
		\hline
	\end{tabular}\caption{ 
	For each $n$ in the first column and each $m$ in the second column, we show in the last column the obtained  $\arg\max_{i=1, \ldots, n}D_{FT}^{S_m,n}(\mathcal{X}_{i};\{\mathcal{X}_{1},\cdots ,\mathcal{X}_{n}\})$ and in the third column the $\rho_2$ distance of this set to $A$ in \eqref{A}.
	As the parameters $n$ and $m$ grow, the $\rho_2$ distance of the approximated $1$-median to the true population $1$-median decreases. 
	}\label{T1}
\end{table*}

We then compare this deepest sample fuzzy sets with $A$ in \eqref{A}, the population $1$-median of $\mathcal{X},$ using the $\rho_2$ distance. The obtained result is shown in the third column of Table \ref{T1} for the different combinations of $n$ and $m.$
%
By Corollary \ref{Cs}, the $\rho_2$ distance between the fuzzy set maximizing the empirical approximated fuzzy Tukey depth and the population $1$-median converges to $0$ as $n \to \infty$. Therefore, we expect this distance to decrease as the sample size increases. 

	\subsubsection{Unknown population $1$-median}\label{Up}
Now, we consider a more general case in which  the true population $1$-median is unknown. 
For that, let 
$\mathcal X$ be as in \eqref{X} with $X, Y$ and $Z$ independent and  continuous real-valued random variables. In particular, let $X$ follow a standard normal distribution 
and $Y$ and $Z$ follow each a chi-squared distribution with $1$ degree of freedom.

Figure \ref{normalfigura} shows different i.i.d. samples $\mathcal{X}_{1},\ldots ,\mathcal{X}_{n}$ with distribution $\mathcal X,$ with size $n=20$ in the left plot, $n=50$ in the middle plot and $n=100$ in the right plot. 
Table \ref{nuevatabla} shows these sample sizes, $n\in\{20, 50, 100\},$ in the first column.
For different values  for the grid $m\in\{1,3,5,10,15\}$, we compute  
\begin{equation}\label{arg}
\arg\max_{i=1, \ldots, n}D_{FT}^{S_m,n}(\mathcal{X}_{i};\{\mathcal{X}_{1},\cdots ,\mathcal{X}_{n}\}),
\end{equation}
 which is the $1$-median approximations of $\mathcal X.$ 
 For the samples illustrated in Figure \ref{normalfigura}, we obtain one unique triangular set in \eqref{arg} for each  $m\in\{1,3,5,10,15\}.$ We display the obtained triangular set in the third column of Table \ref{nuevatabla}, with the corresponding value for $m$ in the second column. We can observe from the table that, for each  $n\in\{20, 50, 100\},$ the same triangular set is depicted for different values of $m.$ We illustrate  in Figure \ref{normalfigura} in blue the triangular set in the first row of each sample size in Table \ref{nuevatabla} and in red the triangular set in the second row of each sample size in the table.

\begin{figure}[!htbp]
	\begin{center}
		\includegraphics[width=0.32\linewidth]{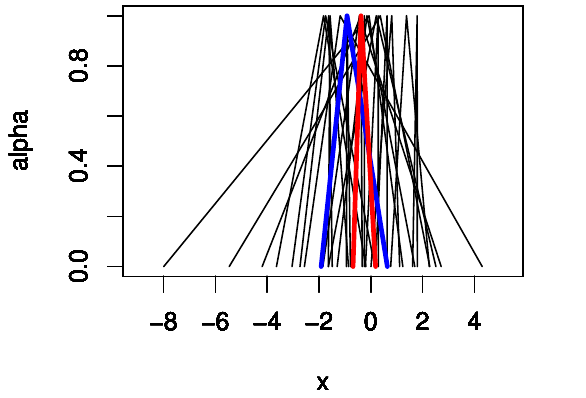}
		\includegraphics[width=0.32\linewidth]{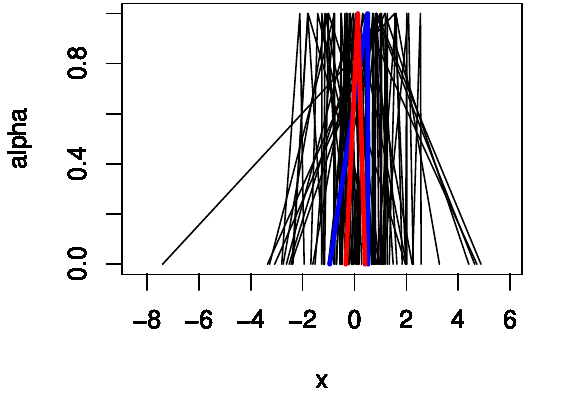}
		\includegraphics[width=0.32\linewidth]{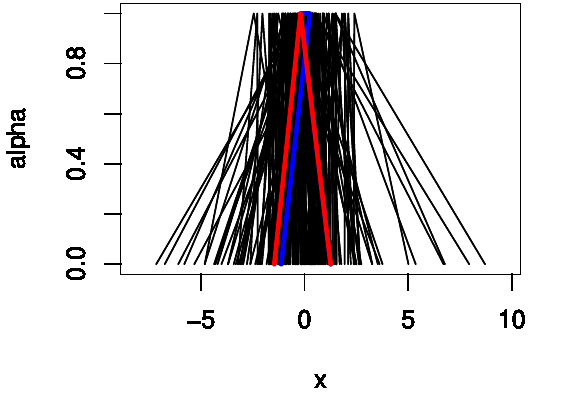}
	\end{center}
	\caption{Empirical approximated fuzzy Tukey depth for samples of sizes $n=20$ (left plot), $n=50$ (middle plot), and $n=100$ (right plot). In each plot, the fuzzy set colored in blue is the sample element with maximal approximated depth, as in \eqref{arg}, for $m\{1, 3\}$ in the left plot, for $m=1$ in the middle plot and for for $m\{3, 5\}$ in the right plot.
	Meanwhile the fuzzy set colored in red is the set resulting from \eqref{arg} which corresponds to $m\{5, 10, 15\}$ in the left plot, to $m\{3, 5, 10, 15\}$ in the middle plot and to $m\{1, 10, 15\}$ in the right plot.}
	\label{normalfigura}
\end{figure}

	\begin{table*}[!htb]
		\centering
		\begin{tabular}{@{}lllllll@{}}
			\hline
			 $n$ & $m$&  &\multicolumn{1}{l}{Approximated $1$-median} \\
			\hline
			$20$ & $1,3$    & & $T(-1.9174,-0.9179,  0.6369)$ \\
			$20$ & $5,10,15$    & & $T(-0.6878,-0.3796, 0.1835)$ \\ \\
			$50$ & $1$ &   &  $T(-0.9538,0.5125,0.5253)$ \\
			$50$ & $3,5,10,15$ &   &  $T(-0.3332,0.1309, 0.4069)$ \\ \\
			$100$ & $3,5$   & &  $T(-1.1313,0.2506,1.1896)$ \\
			$100$ & $1,10,15$  &  &  $T(-1.4633,-0.1988, 1.2567)$ \\
			\hline
		\end{tabular}
		\caption{ When making use of the samples displayed in Figure \ref{normalfigura}, for $n\in\{20, 50, 100\}$ (first column), we display in the third column the triangular set in \eqref{arg}  for each    $m\in\{1,3,5,10,15\}$ (second column). Note that such set is unique for that particular samples and that the same set is obtained for different values of $m.$
		}
		\label{nuevatabla}
	\end{table*}
	
%

Furthermore, in the Appendix we include a simulation in which we show that the same approximated 1-median, \eqref{arg}, is obtained when $m$ increases. There the obtained results for $m\in\{1,3,5,10,15\}$ are compared for those obtained with $m=20$  for different sample sizes $n\in\{20, 50, 100\}.$


			\section{Concluding remarks}\label{conclusions}
			The fuzzy Tukey depth is the first instance of depth introduced in the literature for fuzzy random variables; it coincides with the Tukey, or halfspace, depth when computed on crisp sets and satisfy the two existing notions of fuzzy depth. 
			This work has been dedicated to study its satisfaction of continuity properties,  which are required by a depth function for regularity and inferential stability.
	
	\citet{ZuoSerfling} proved that the halfspace depth is upper-semicontinuous in the first argument with respect to the Euclideand distance and in \citet{segundoarticulo}  we proved that the Tukey depth for compact, convex and non-empty random sets is upper-semicontinuous with respect to the Hausdorff distance (see Proposition \ref{propositionTukeyCompact}). Using this proposition, we have proved in Theorem \ref{TukeyUpperSemi}, and Corollary \ref{corolarioupper}, that the fuzzy Tukey depth is an upper-semicontinuous function with respect to the first argument considering the metrics $d_\infty,$ $d_r$ and $\rho_r,$  for any $r\in [1,\infty).$
	
	We have also proved that the empirical version of the fuzzy Tukey depth is a strong consistent estimator of its population counterpart (see Theorem \ref{fuzzyTukeyconsistency}). This implies that the empirical version of the fuzzy Tukey depht converges uniformly and almost surely to its population version. Thus, Theorem \ref{fuzzyTukeyconsistency} is a Uniform Law of Large Numbers for the empirical process defined by the fuzzy Tukey depth. The proof of this result is based on the proof of the Glivenko-Cantelli Theorem.
	
	Using the upper-semicontinuity and the consistency of the empirical depth we have proved in Theorem \ref{medianateorema}, and Corollary \ref{medianacorolario}, that, if a fuzzy random variable has a unique $1$-median, the distance $d_\infty,$ $d_r$ and $\rho_r,$  for any $r\in [1,\infty),$ between the sequence of maximizers of the empirical version and the $1$-median of the fuzzy random variable converges to $0$ almost surely. This result is based on some ideas proposed by \citet{arcones} for the multivariate case.
	
	The main, and  as far as we are concern only, issue  of the fuzzy Tukey depth is its computation in practice. Even in the case of $p = 1$, to compute the exact value with respect to a fuzzy random variable we have to compute the infimum over $\alpha\in [0,1]$. With the exception of trivial cases, 
	to compute the exact value is unfeasible. Theorem \ref{teoremaaproximacion} provides a way to approximate the exact value of the fuzzy Tukey depth for triangular 
	fuzzy random variables, making use of what we have called the grid fuzzy Tukey depth. The convergence in our result is deterministic. It is also possible to propose a new instance of depth function by considering random selections of $\alpha\in [0,1]$, following the same ideas as in the random Tukey depth proposed by \citet{randomTukey} in the multivariate and functional cases.
	
	Finally, we have proved the convergence of the empirical grid fuzzy Tukey depth in Theorem \ref{teoremaaproximacion2} and of the 
	maximizers of the empirical grid fuzzy Tukey depth in Theorem \ref{teoremamedianasaprox}, and Corollary \ref{Cs}.
	In Section \ref{simulations} we illustrate the convergence of the maximizers. For that we consider two fuzzy random variables. In the first case, we consider a fuzzy random variable for which we know the exact $1$-median, and compare it with the element of the sample that maximizes the empirical grid fuzzy Tukey depth, using the $\rho_2$ distance. In the second case, we consider a more general  fuzzy random variable, for which the $1$-median is unknown, and analyze the convergence of  the fuzzy sets in the sample that maximizes the empirical depth.
	
	For future work, we aim to establish these results for the other existing fuzzy depths in the literature.

			\label{concluding}



\section*{Funding}
The authors are supported by Grant PID2022-139237NB-I00, funded by\linebreak
 MCIN/AEI/10.13039/501100011033 and ``ERDF A way of making Europe''.




\section*{Appendix to Section \ref{compt}}
\subsection{Proofs of the results in Section \ref{compt}}\label{app}
\begin{proof}[Proof of Theorem \ref{teoremaaproximacion}]
	Let us consider the fuzzy random variable $\mathcal X = T(X-Y,X,X+Z)$, where $X$ is a continuous real random variable and $Y$ and $Z$ are two positive and real continuous random variables. Let us consider $A = T(a,b,c)$ a triangular fuzzy number, where $a,b$ and $c$ are real numbers such that $a\leq b\leq c$.
	
	It is clear that $s_{\mathcal X}(1,\alpha) = X+(1-\alpha)Z$ and $s_{\mathcal X}(-1,\alpha) = (1-\alpha)Y-X$ for every $\alpha\in [0,1]$ and $s_A(1,\alpha) = c-\alpha (c-b)$ and $s_A(-1,\alpha) = -a-\alpha(b-a)$ for every $\alpha\in [0,1]$.
	
	If we consider the subsets of $[0,1]$
	
	\begin{equation}\nonumber
		S_m = \{i/2^m : i\in\{0,1,\ldots ,2^m\}\},
	\end{equation}
	
	it is clear that $S_m\subset S_{m+1}$ for every $n\in\mathbb N$. The limit of the sequence $\{S_m\}_m$ is the set
	
	\begin{equation}\label{unionH}
		S = \bigcup_{m=1}^\infty S_m = \lim_{m\rightarrow\infty}S_m,
	\end{equation}
	
	which is a dense subset of $[0,1]$. Using the same argument used in the proof of Theorem \ref{TukeyUpperSemi}, we have that
	
	\begin{equation}\nonumber
		D_{FT}(A;\mathcal X) = \inf_{\alpha\in S}\min\{HD(s_{A}(1,\alpha);s_{\mathcal X}(1,\alpha)),HD(s_{A}(-1,\alpha);s_{\mathcal X}(-1,\alpha))\},
	\end{equation}
	
	where $HD$ is the Tukey depth for the univariate case. Exchanging the infimum with the minimum, we have
	\begin{equation}\nonumber
		D_{FT}(A;\mathcal X) = \min\{\inf_{\alpha\in S}HD(s_{A}(1,\alpha);s_{\mathcal X}(1,\alpha)),\inf_{\alpha\in S}HD(s_{A}(-1,\alpha);s_{\mathcal X}(-1,\alpha))\}.
	\end{equation}
	Without loss of generality, the result is equivalent to prove
	\begin{equation}\nonumber
		\lim_{m\rightarrow\infty}\min_{\alpha\in S_m} HD(s_A(1,\alpha);s_{\mathcal X}(1,\alpha)) = \inf_{\alpha\in S} HD(s_A(1,\alpha);s_{\mathcal X}(1,\alpha))
	\end{equation}
	If we denote by $\mathbb F_X$ the cumulative distribution function of a real random variable $X$, according to the definition of the halfspace depth in the univariate case and following similar arguments than above, the result is equivalent to prove
	\begin{equation}\nonumber
		\lim_{m\rightarrow\infty} \min_{\alpha\in S_m}\mathbb F_{s_\mathcal X(1,\alpha)}(s_A(1,\alpha)) = \inf_{\alpha\in S}\mathbb F_{s_\mathcal X(1,\alpha)}(s_A(1,\alpha))
	\end{equation}
	
	Taking into account the expression of the support function of $\mathcal X$ and $A$, we have to prove
	
	\begin{equation}\nonumber
		\lim_{m\rightarrow\infty} \min_{\alpha\in S_m}\mathbb F_{X+(1-\alpha)Z}(c-\alpha(b-c)) = \inf_{\alpha\in S}\mathbb F_{X+(1-\alpha)Z}(c-\alpha(c-b))
	\end{equation}
	
	Given $\alpha\in S$ and $\{\alpha_m\}_m$ a sequence of elements of $S$ such that $\alpha_m\longrightarrow\alpha$ as $m\rightarrow\infty$, it is clear that $c-\alpha_m(b-c)\longrightarrow c-\alpha(b-c)$ as $m\rightarrow\infty$ and $X+(1-\alpha_m)Z\longrightarrow X+(1-\alpha)Z$ almost surely, as $m\rightarrow\infty$. As the cumulative distribution function of $X+(1-\alpha)Z$ is continuous, we have that the sequence of cumulative distribution functions $\{\mathbb F_{X+(1-\alpha_m)Z}\}_m$ converges uniformly to $\mathbb F_{X+(1-\alpha)Z}$. Thus, the function $f:S\longrightarrow [0,1]$ defined by $f(\alpha) = \mathbb F_{X+(1-\alpha)Z}(c-\alpha(c-b))$ is continuous with respect to $\alpha$. As $S\subset [0,1]$ is a compact set, we have that there exists $\alpha_0\in [0,1]$ such that
	
	\begin{equation}\nonumber
		\inf_{\alpha\in S}\mathbb F_{X+(1-\alpha)Z}(c-\alpha(b-c)) = \mathbb F_{X+(1-\alpha_0)Z}(c-\alpha_0(b-c))
	\end{equation}
	
	As $\lim_{m\rightarrow\infty} S_m = S$, we can find a sequence $\{\alpha_m\}_m$ such that $\alpha_m\in S_m$ for every $m\in\mathbb N$ and $\alpha_m\longrightarrow\alpha_0$ as $m\rightarrow\infty$. Using similar arguments as above, we have that
	\begin{equation}\label{limiteHn}
		\lim_{m\rightarrow\infty}\mathbb F_{X+(1-\alpha_n)Z}(c-\alpha_n(b-c)) = \mathbb F_{X+(1-\alpha_0)Z}(c-\alpha_0(b-c))
	\end{equation}
	
	Thanks to Equation \eqref{unionH}, we have that
	\begin{equation}\nonumber
		\begin{aligned}
			&\mathbb F_{X+(1-\alpha_0)Z}(c-\alpha_0(b-c)) = \inf_{\alpha\in S}\mathbb F_{X+(1-\alpha)Z}(c-\alpha(b-c))\leq\\
			&\min_{\alpha\in S_m}\mathbb F_{X+(1-\alpha)Z}(c-\alpha(b-c))\leq \mathbb F_{X+(1-\alpha_m)Z}(c-\alpha_m(b-c)).
		\end{aligned}
	\end{equation}
	
	Using Equation \eqref{limiteHn} and taking limits in the later expression, we get
	
	\begin{equation}\nonumber
		\lim_{m\rightarrow\infty}\min_{\alpha\in S_m}\mathbb F_{X+(1-\alpha)Z}(c-\alpha(b-c)) = \inf_{\alpha\in S}\mathbb F_{X+(1-\alpha)Z}(c-\alpha(b-c)).
	\end{equation}
	
\end{proof}

\begin{proof}[Proof of Theorem \ref{teoremaaproximacion2}]
	Let $\mathcal X$ be a triangular fuzzy random number, thus $\mathcal X = T(X,Y,Z)$, with $X\leq Y\leq Z$ continuous real-valued random variables. Let $A = T(a,b,c)$ be a triangular fuzzy number, with $a\leq b\leq c$. It is clear that the support function of $\mathcal X$ and $A$ are defined by
	\begin{equation}\nonumber
		\begin{aligned}
			s_\mathcal X(1,\alpha) &= Z+\alpha(Y-Z),\text{ }s_\mathcal X(-1,\alpha) = -(X+\alpha(Y-X)),\\
			s_A(1,\alpha) &= c+\alpha(b-c),\text{ }s_A(-1,\alpha) = -(a+\alpha(b-a)),
		\end{aligned}
	\end{equation}
	
	for every $\alpha\in [0,1]$. We can express $D_{FT}$ and $D_{FT}^{S_m}$ as
	
	\begin{equation}\nonumber
		\begin{aligned}
			D_{FT}(A;\mathcal X) &= \inf_{\alpha\in [0,1]}\min\{HD(c+\alpha(b-c);Z+\alpha(Y-Z)),HD(a+\alpha(b-a);X+\alpha(Y-X))\}\\
			D_{FT}^{S_m}(A;\mathcal X) &= \min_{\alpha\in S_m}\min\{HD(c+\alpha(b-c);Z+\alpha(Y-Z)),HD(a+\alpha(b-a);X+\alpha(Y-X))\},
		\end{aligned}
	\end{equation}
	
	for every $m\in\mathbb N$.
	
	Let $\mathcal X_1,\ldots ,\mathcal X_n$ be a simple random sample of $\mathcal X$. By measurability results, we can assume that we have simple random samples $\{X_i\}_{i=1}^n$, $\{Y_i\}_{i=1}^n$ and $\{Z_i\}_{i=1}^n$ of $X, Y$ and $Z$, respectively and $\mathcal X_i = T(X_i,Y_i,Z_i)$ for every $i = 1,\ldots ,n$. Using the triangle inequality, we have that
	\begin{equation}\nonumber
		\left|D_{FT}^{S_m,n}(A;\mathcal X) - D_{FT}(A;\mathcal X)\right|\leq \left|D_{FT}^{S_m,n}(A;\mathcal X) - D_{FT}^{S_m}(A;\mathcal X)\right| + \left|D_{FT}^{S_m}(A;\mathcal X) - D_{FT}(A;\mathcal X)\right|
	\end{equation}
	
	Using Theorem \ref{teoremaaproximacion}, we have that the second term in the right side of the inequality converges to $0$ as $m\rightarrow\infty$. Thus, to prove our result we need to prove that the first term converges to $0$ almost surely as $n,m\rightarrow\infty$.
	
	Let $\alpha\in [0,1]$ and define $u_\alpha = c+\alpha (b-c), v_\alpha = a+\alpha (b-a), U_\alpha = Z+\alpha(Y-Z)$ and $V_\alpha = X+\alpha(Y-X)$. With this notation, the expression of $D_{FT}^{S_m}(A;\mathcal X)$ becomes
	\begin{equation}\nonumber
		D_{FT}^{S_m}(A;\mathcal X) = \min_{\alpha\in S_m}\min\{HD(u_\alpha;U_\alpha),HD(v_\alpha;V_\alpha)\}
	\end{equation}
	On the other hand, let us define $U_{\alpha,i} = Z_i+\alpha(Y_i-Z_i)$ and $V_{\alpha,i} = X_i+\alpha(Y_i-X_i)$. Define also the functions $g(\alpha) = \min\{HD(u_\alpha;U_\alpha),HD(v_\alpha;V_\alpha)\}$ and $g_n(\alpha)$ as it sample version based on the samples  $\{X_i\}_{i=1}^n$, $\{Y_i\}_{i=1}^n$ and $\{Z_i\}_{i=1}^n$. Thus, we have that
	
	\begin{equation}\nonumber
		\begin{aligned}
			D_{FT}^{S_m}(A;\mathcal X) &= \min_{\alpha\in S_m} g(\alpha)\\
			D_{FT}^{S_m,n}(A;\mathcal X) &= \min_{\alpha\in S_m} g_n(\alpha).
		\end{aligned}
	\end{equation}
	
	With this notation, we have
	
	\begin{equation}\nonumber
		\left|D_{FT}^{S_m,n}(A;\mathcal X) - D_{FT}^{S_m}(A;\mathcal X)\right|\leq\sup_{\alpha\in S_m}|g_n(\alpha) - g(\alpha)|\leq\sup_{\alpha\in [0,1]}|g_n(\alpha) - g(\alpha)|
	\end{equation}
	
	Taking into account the definitions of the functions $g_n$ and $g$ and using properties of the supremum and the infimum, we have the following inequalities
	
\begin{equation}
	\begin{aligned}
		\sup_{\alpha\in [0,1]}  |g_n(\alpha) & -g(\alpha)|
		\leq \\
		\leq&\sup_{\alpha\in [0,1]}
		\max\Bigl\{
		|HD(u_\alpha;U_{\alpha,n})-HD(u_\alpha;U_\alpha)|, 
		|HD(v_\alpha;V_{\alpha,n})-HD(v_\alpha;V_\alpha)|
		\Bigr\} \\
		\leq &
		\sup_{\alpha\in [0,1]}
		\max\Bigl\{
		|\mathbb{P}(U_{\alpha,n}\le u_\alpha)-\mathbb{P}(U_\alpha\le u_\alpha)|, 
		|\mathbb{P}(U_{\alpha,n}\ge u_\alpha)-\mathbb{P}(U_\alpha\ge u_\alpha)|,
		 \\&\qquad\qquad\hspace{.45cm}
		|\mathbb{P}(V_{\alpha,n}\le v_\alpha)-\mathbb{P}(V_\alpha\le v_\alpha)|, 
		|\mathbb{P}(V_{\alpha,n}\ge v_\alpha)-\mathbb{P}(V_\alpha\ge v_\alpha)|
		\Bigr\}.
	\end{aligned}
\end{equation}

It is sufficies to prove that 
\begin{equation}\nonumber
	\sup_{\alpha\in [0,1]}|\mathbb{P}(U_{\alpha,n}\le u_\alpha)-\mathbb{P}(U_\alpha\le u_\alpha)|\rightarrow 0,\text{ a.s.} [\mathbb P].
\end{equation}

The proof for the other terms is analogous. Let us denote by $\mathbb P_n$ the empirical measure of the sample. Then, we have that, if we consider, for any given $\alpha\in [0,1]$, the set 
\begin{equation}\nonumber
	\mathcal C_\alpha = \{(y,z)\in\mathbb R^2 : z+\alpha(y-z)\leq u_\alpha\} = \{(y,z)\in\mathbb R^2 : (z-c) + \alpha ((y-z)-(b-c))\leq 0\},
\end{equation}
we have that
\begin{equation}\nonumber
	\sup_{\alpha\in [0,1]}\left|\mathbb{P}(U_{\alpha,n}\le u_\alpha)-\mathbb{P}(U_\alpha\le u_\alpha)\right| = \sup_{\alpha\in [0,1]}\left|\mathbb P_n(\mathcal C_\alpha) - \mathbb P(\mathcal C_\alpha)\right|,
\end{equation}
for every $\alpha\in [0,1]$. It is easy to see that each $\mathcal C_\alpha$ is a closed halfspace in $\mathbb R^2$. Thus, the class of sets 
\begin{equation}\nonumber
	\mathcal C = \{\mathcal C_\alpha : \alpha\in [0,1]\}
\end{equation}
is contained in the class of closed halfspaces of $\mathbb R^2$, which has finite VC-dimension (see \cite{vanderwaartwellner}). It implies that $\mathcal C$ is a $\mathbb P$-Glivenko-Cantelli class (see \cite{shorackwellner}). As
\begin{equation}\nonumber
	\sup_{\alpha\in [0,1]}\left|\mathbb{P}(U_{\alpha,n}\le u_\alpha)-\mathbb{P}(U_\alpha\le u_\alpha)\right| = \sup_{C\in\mathcal C}\left|\mathbb P_n(C) - \mathbb P(C)\right|,
\end{equation}
we conclude that
\begin{equation}\nonumber
	\sup_{\alpha\in [0,1]}\left|\mathbb{P}(U_{\alpha,n}\le u_\alpha)-\mathbb{P}(U_\alpha\le u_\alpha)\right|\longrightarrow 0,\text{ a.s. }[\mathbb P]
\end{equation}	
\end{proof}

\begin{proof}[Proof of Theorem \ref{teoremamedianasaprox}]
	Let $\mathcal X\in C^0[\mathbb R]$ be a triangular fuzzy random number, thus $\mathcal X = T(X,Y,Z)$ with $X\leq Y\leq Z$ continuous real-valued random variables. Thus, the $\alpha$-level of $\mathcal X$ is 
	\begin{equation}\nonumber
		(\mathcal X)_\alpha = [\alpha Y + (1-\alpha)X, \alpha Y + (1-\alpha)Z],
	\end{equation}
	
	for every $\alpha\in [0,1]$. Let us consider $\mathcal X_1,\ldots ,\mathcal X_n$ a simple random sample of $\mathcal X$. By measurability results, we can assume that we have simple random samples $\{X_i\}_{i=1}^n$, $\{Y_i\}_{i=1}^n$ and $\{Z_i\}_{i=1}^n$ of $X, Y$ and $Z$, respectively and $\mathcal X_i = T(X_i,Y_i,Z_i)$ for every $i = 1,\ldots ,n$. 
	
	Let $\{M_{m,n}\}$ be a sequence of maximizers of $D_{FT}^{S_m,n}(\cdot;\mathcal X)$. For every $\alpha\in [0,1]$, let us denote the $\alpha$-levels 
	\begin{equation}\nonumber
		(M_{m,n})_\alpha = [m^-_{m,n}(\alpha), m^+_{m,n}(\alpha)]
	\end{equation}
	
	Following the same ideas as in \cite[Theorem 4.4]{quintoarticulo}, we have that, for every $n,m\in\mathbb N$, we have that
	
	\begin{equation}\nonumber
		\begin{aligned}
		m^-_{m,n}(\alpha) &= Med_n\left(\alpha Y + (1-\alpha)X\right),\\
		m^+_{m,n}(\alpha) &= Med_n\left(\alpha Y + (1-\alpha)Z\right),\\
	\end{aligned}
	\end{equation}
	
	for every $\alpha\in S_m$, where $Med_n$ denotes the empirical median associated with the samples $\{\alpha Y_i + (1-\alpha)X_i\}_{i = 1}^n$ and $\{\alpha Y_i + (1-\alpha)Z_i\}_{i=1}^n$, respectively. 
	
	As $\mathcal X\in C^0[\mathcal F_c(\mathbb R)]$ and $\mathcal X = T(X,Y,Z)$, we have that there exists a unique $1$-median $M\in\mathcal F_c(\mathbb R)$ of $\mathcal X$. In particular, by \cite[Theorem 4.2]{quintoarticulo}, we have that if $(M)_\alpha = [m^-(\alpha),m^+(\alpha)]$, for every $\alpha\in [0,1]$, then 
	\begin{equation}\nonumber
		\begin{aligned}
		m^-(\alpha) &= Med(\alpha Y+(1-\alpha)X)\\
		m^+(\alpha) &= Med(\alpha Y+(1-\alpha)Z)
	\end{aligned}
	\end{equation}
	
	As $X,Y$ and $Z$ are continuous random variables, we have that $\alpha Y+(1-\alpha)X$ and $\alpha Y+(1-\alpha)Z$ are also continous random variables. In fact, the functions $m^-(\cdot)$ and $m^+(\cdot)$ are continuous functions in $\alpha$. As $[0,1]$ is compact, we have that the functions $m^-(\cdot)$ and $m^+(\cdot)$ are uniformly continuous on $[0,1]$. 
	
	On the other hand, the simple random sample $\mathcal X_1,\ldots ,\mathcal X_n$ can be expressed as\linebreak
	 $\mathcal X_i = T(X_i,Y_i,Z_i)$ for every $i = 1,\ldots ,n$.
	Let us denote for every $\alpha\in [0,1]$
	
	\begin{equation}\nonumber
		\begin{aligned}
		m_n^-(\alpha) &= Med_n(\alpha Y + (1-\alpha)X),\\
		m_n^+(\alpha) &= Med_n(\alpha Y + (1-\alpha)Z).
	\end{aligned}
	\end{equation}
	
	Let us write $U_\alpha = \alpha Y + (1-\alpha)X$ and $U_{\alpha,i} = \alpha Y_i + (1-\alpha)X_i$ for every $\alpha\in [0,1]$ and $i = 1,\ldots ,n$. Let us define $F_\alpha(t) = \mathbb P(U_\alpha\leq t)$ and
	\begin{equation}\nonumber
	 F_{\alpha,n}(t) = \cfrac{1}{n}\sum_{i=1}^n \mathbb I_{\{U_{\alpha,i}\leq t\}}.
	 \end{equation} 
	 
	 Let us consider the closed halfspace $\mathcal C_{\alpha,t} = \{(x,y)\in\mathbb R^2 : \alpha y + (1-\alpha)x\leq t\}$ for every $\alpha\in [0,1], t\in\mathbb R$. It is clear that the class $\mathcal C = \{\mathcal C_{\alpha,t} : \alpha\in [0,1], t\in\mathbb R\}$ is contained in the class of closed halfspaces of $\mathbb R^2$, which is a $\mathbb P$-Glivenko-Cantelli class (see \cite{shorackwellner,vanderwaartwellner}). It implies that 
	 
	 \begin{equation}\label{ecuacionsupremo}
	 	\sup_{\alpha\in [0,1]}\sup_{t\in\mathbb R}|F_{\alpha,n}(t) - F_\alpha(t)|\rightarrow 0,\text{ almost surely }[\mathbb P]
 	 \end{equation}
	 
	 For every $\alpha\in [0,1]$, we have that $F_\alpha(m^-(\alpha)) = 1/2$. As the median of $\alpha Y + (1-\alpha)X$ is unique, we have that, for every $\varepsilon >0$, we have that
	 \begin{equation}\nonumber
	 	F_\alpha(m^-(\alpha)-\varepsilon) < \cfrac{1}{2} < F_\alpha(m^-(\alpha)+\varepsilon).
	\end{equation}
	
	Let us define, for every $\varepsilon >0$ and every $\alpha\in [0,1]$, the function
	
	\begin{equation}
		g_\varepsilon (\alpha) = \min\{1/2 - F_\alpha(m^-(\alpha) - \varepsilon),F_\alpha(m^-(\alpha)+\varepsilon) - 1/2 \} > 0.
	\end{equation}
	
	As the functions $\alpha\rightarrow m^-(\alpha)$ and $(\alpha,t)\rightarrow F_\alpha(t)$ are continous, we have that the function $\alpha\rightarrow g_\varepsilon(\alpha)$ is continuous on $[0,1]$. It implies that
	
	\begin{equation}\nonumber
		c_\varepsilon = \inf_{\alpha\in [0,1]} g_\varepsilon(\alpha) > 0.
	\end{equation}
	
	By Equation \eqref{ecuacionsupremo}, for a sufficiently large $n$, we have that
	
	\begin{equation}\nonumber
		\sup_{\alpha\in [0,1]}\sup_{t\in\mathbb R}|F_{\alpha,n}(t) - F_\alpha (t)|\leq c_\varepsilon,\text{ almost surely }[\mathbb P].
	\end{equation}
	
	Therefore, we have that, for every $\alpha\in [0,1]$, we have that
	\begin{equation}\nonumber
		\begin{aligned}
		&F_{\alpha,n}(m^-(\alpha) - \varepsilon) < \cfrac{1}{2}\\
		&F_{\alpha,n}(m^-(\alpha) + \varepsilon) > \cfrac{1}{2}\\
		\end{aligned}
	\end{equation}
	
	It implies that 
	\begin{equation}\nonumber
		\sup_{\alpha\in [0,1]}|m_n^-(\alpha) - m^-(\alpha)|\leq\varepsilon,\text{ almost surely }[\mathbb P].
\end{equation}
	
	Since $\varepsilon >0$ is taken arbitrarily, we conclude that 
	\begin{equation}\label{convergenciamedianas}
		\sup_{\alpha\in [0,1]}|m_n^-(\alpha) - m^-(\alpha)|\rightarrow 0,\text{ almost surely }[\mathbb P]
	\end{equation}
	
	Analogously, we have that 
	
	\begin{equation}\nonumber
		\sup_{\alpha\in [0,1]}|m_n^+(\alpha) - m^+(\alpha)|\rightarrow 0,\text{ almost surely }[\mathbb P]
	\end{equation}
	
	By hypothesis, given $n,m\in\mathbb N$, we have that
	
	\begin{equation}\label{ecuacioensmedianas}
		\begin{aligned}
			m_{m,n}^-(\beta) &= m_n^-(\beta),\\
			m_{m,n}^+(\beta) &= m_n^+(\beta),
		\end{aligned}
	\end{equation}
	
	for every $\alpha\in S_m$. Let $n,m\in\mathbb N$. Given any $\alpha\in [0,1]$, choose $\beta\in S_m$ such that $|\alpha-\beta|\leq 2^{-m}$. Without loss of generality, let us assume that $\beta\geq\alpha$. Thus,
	\begin{equation}\label{ecuacionprincipal}
		\begin{aligned}
			|m_{m,n}^-(\alpha) &- m^-(\alpha)|\\
			&\leq |m_{m,n}^-(\alpha) - m_{m,n}^-(\beta)| + |m_{m,n}^-(\beta) - m^-(\beta)| + |m^-(\beta)-m^-(\alpha)|\\
			&\leq|m_{m,n}^-(\alpha) - m_{m,n}^-(\beta)| + \sup_{\alpha\in [0,1]}|m_n^-(\alpha) - m^-(\alpha)| + |m^-(\beta)-m^-(\alpha)|\\
			&\leq|m_{m,n}^-(\beta - 2^{-m}) - m_{m,n}^-(\beta)|+ \sup_{\alpha\in [0,1]}|m_n^-(\alpha) - m^-(\alpha)| + |m^-(\beta)-m^-(\alpha)|\\
			&\leq\sup_{\alpha\in [0,1]}|m_n^-(\alpha-2^{-m})-m_n^-(\alpha)|+ \sup_{\alpha\in [0,1]}|m_n^-(\alpha) - m^-(\alpha)| + |m^-(\beta)-m^-(\alpha)|,
		\end{aligned}
	\end{equation}
	where Equation \eqref{ecuacioensmedianas} are used in the second and fourth inequalities. The second converges to $0$ almost surely thanks to Equation \eqref{convergenciamedianas}, while the third term conveges to $0$ as the function $\alpha\rightarrow m^-(\alpha)$ is uniformly continuous. We can bound the first term 
	\begin{equation}\nonumber
		\begin{aligned}
			&\sup_{\alpha\in [0,1]}|m_n^-(\alpha-2^{-m})-m_n^-(\alpha)|\leq\sup_{\alpha\in [0,1]} |m_n^-(\alpha-2^{-m}) - m^-(\alpha - 2^{-m})|\\ 
			&+ \sup_{\alpha\in [0,1]} |m^-(\alpha-2^{-m}) - m^-(\alpha)| +\sup_{\alpha\in [0,1]}|m_n^-(\alpha) - m^-(\alpha)|.
		\end{aligned}
	\end{equation}
	The right side converges to $0$ almost surely thanks to Equation \eqref{convergenciamedianas} and thanks to the fact that the function $\alpha\rightarrow m^-(\alpha)$ is uniformly continuous. 
	
	Taking supremums on both sides we conclude that 
	\begin{equation}\nonumber
		\sup_{\alpha\in [0,1]}|m_{m,n}^-(\alpha) - m^-(\alpha)|\rightarrow 0,\text{ almost surely }[\mathbb P].
	\end{equation}
	Analogously, we also conclude that 
		\begin{equation}\nonumber
		\sup_{\alpha\in [0,1]}|m_{m,n}^+(\alpha) - m^+(\alpha)|\rightarrow 0,\text{ almost surely }[\mathbb P].
	\end{equation}
	
	Taking into account the definition of $d_\infty$ metric, 	as $n,m\rightarrow\infty,$ we have that
	\begin{equation}\nonumber
		d_\infty(M_{m,n},M) = \sup_{\alpha\in [0,1]}\max\{|m_{m,n}^-(\alpha) - m^-(\alpha)|,|m_{m,n}^+(\alpha)-m^+(\alpha)|\}
	\end{equation}
	Thus, with the above results, we conclude that \begin{equation}\nonumber
		d_\infty(M_{m,n},M)\rightarrow 0,\text{ almost surely }[\mathbb P].
	\end{equation}
\end{proof}

\subsection{Additional simulations}

Here we further analyse the unknown population 1-median case of Subection \ref{Up}. For that, we do 100 iterations of the following procedure. In each iteration, we draw  $\mathcal{X}_{1},\ldots ,\mathcal{X}_{100}$ independently with distribution $\mathcal X,$ described in Subection \ref{Up}. We analyze this sample and the subsamples $\mathcal{X}_{1},\ldots ,\mathcal{X}_{50}$ and $\mathcal{X}_{1},\ldots ,\mathcal{X}_{20}.$ We refer to these cases as $n\in\{100, 50, 20\},$ respectively. For each of them, we compute the 1-median approximations in \eqref{arg} making use of   $m\in\{1,3,5,10,15, 20\}.$ Note that, as it happens with the, one-dimensional, median,  the 1-median approximation is not necessarily a unique  fuzzy number. 

Our objective here is to compare, for each $n\in\{20, 50, 100\},$ the fuzzy number(s) obtained with $m=20$ with those obtained with $m\in\{1,3,5,10,15\}.$ Thus, for each iteration, $n\in\{20, 50, 100\}$ and $m\in\{1,3,5,10,15\},$   we check if the obtained fuzzy number(s) coincide with the one(s) obtained with $m=20.$ The amount of coincident fuzzy numbers is divided by the amount of fuzzy numbers obtained  with $m=20.$ We add this value over the 100 iterations and display the results in Table \ref{tablaapp}.
	\begin{table}[!htb]
		\centering
		\begin{tabular}{@{}llllll@{}}
			\hline
			  & $m=1$&$m=3$  &$m=5$ & $m=10$&$m=15$\\
			\hline
			$n=20$ & 96.3&  98.5& 99.5& 100& 100 \\
			$n=50$ & 89.3&  95.5&  99.5&100 &100\\ 
			$n=100$ & 89.8 &  94.2&  98.3& 100& 100 \\
			\hline
		\end{tabular}
		\caption{Over 100 iterations, amount of times that the  1-median approximation for $m=20$ coincides with that obtained for  $m=1$ (second column), $m=3$  (third column), $m=5$ (fourth column),  $m=10$ (fifth column) and $m=15$ (sixth column). This is done for samples with size $n=100$ (fourth row), and the corresponding subsamples of size $n=50$ (third row) and $n=20$ (second row). When two compared 1-median approximations do not have the same amount of elements, we count how many elements coincide and divide it by the amount of elements obtained with $m=20.$ 
		}
		\label{tablaapp}
	\end{table}
	
Note that generally we obtain only one fuzzy number for each  iteration, $n\in\{20, 50, 100\}$ and  $m\in\{1,3,5,10,15, 20\}.$ However, to explain it in more detail, let us assume that we obtain $k\in\mathbb{N}$ fuzzy numbers with $m=20$ and $l\in\mathbb{N}$ for another $m,$ for a fixed $n$ and iteration. Le us assume that $h\leq\min\{k, l\}$ fuzzy numbers coincide among the two sets, then in that particular iteration, to obtain the values in Table \ref{tablaapp} we add the value $h/k.$

The results in   Table \ref{tablaapp} show that the obtained 1-median approximations are stable as $m$ increases. Thus, for each of the 100 iterations, the same 1-median approximations were obtained for  $m\in\{10,15, 20\}.$ For smaller values of $m$ there are very few cases in which the obtained 1-median approximations does not coincide entirely with those obtained for $m=20.$

\end{document}